\newcommand{\bydef}{\stackrel{\rm def}{=}}
\newcommand{\cB}{{\mathcal B}}
\newcommand{\cE}{{\mathcal E}}
\newcommand{\cH}{{\mathcal H}}
\newcommand{\cK}{{\mathcal K}}
\newcommand{\cM}{{\mathcal M}}
\newcommand{\cT}{{\mathcal T}}
\newcommand{\bC}{{\mathbb C}}
\newcommand{\fJ}{\mathfrak J}
\newcommand{\fT}{\mathfrak T}
\newtheorem{thm}{Theorem}[section]
\newtheorem{corollary}[thm]{Corollary}
\newtheorem{lemma}[thm]{Lemma}
\newtheorem{theorem}{Theorem}
\theoremstyle{definition}
\newtheorem{definition}[thm]{Definition}
\newtheorem{remark}[thm]{Remark}
\numberwithin{equation}{section}
\def\textmatrix#1&#2\\#3&#4\\{\bigl({#1 \atop #3}\ {#2 \atop #4}\bigr)}
\def\dispmatrix#1&#2\\#3&#4\\{\left({#1 \atop #3}\ {#2 \atop #4}\right)}
\numberwithin{equation}{section}
\def\textmatrix#1&#2\\#3&#4\\{\bigl({#1 \atop #3}\ {#2 \atop #4}\bigr)}
\def\dispmatrix#1&#2\\#3&#4\\{\left({#1 \atop #3}\ {#2 \atop #4}\right)}
\begin{document}

\title[Polydisc Toeplitz operators]{Toeplitz operators and Hilbert modules on the symmetrized polydisc}

\author[Bhattacharyya]{Tirthankar Bhattacharyya}
\address[Bhattacharyya]{Department of Mathematics, Indian Institute of Science, Bangalore 560 012, India.}
\email{tirtha@iisc.ac.in}

\author[Das]{B. Krishna Das}
\address[Das]{Department of Mathematics, Indian Institute of Technology Bombay, Powai, Mumbai 400076, India.}
\email{dasb@math.iitb.ac.in; bata436@gmail.com}

\author[Sau]{Haripada Sau}
\address[Sau]{Department of Mathematics, Indian Institute of Science Education and Research, Pashan, Pune 411008, India.}
\email{haripadasau215@gmail.com; hsau@iiserpune.ac.in}

\subjclass[2020]{47A13, 47A20, 47B35, 46L07}
\keywords{Symmetrized Polydisc, Polydisc, Toeplitz operator, contractive Hilbert modules, contractive embeddings.}

\begin{abstract}
When is the collection of $\mathsf S$-Toeplitz operators with respect to a tuple of commuting bounded operators $\mathsf S= (S_1, S_2, \ldots , S_{d-1}, P)$, which has the symmetrized polydisc as a spectral set, non-trivial? The answer is in terms of powers of $P$ as well as in terms of a unitary extension. En route, Brown-Halmos relations are investigated. A commutant lifting theorem is established. Finally, we establish a general result connecting the $C^*$-algebra generated by the commutant of $\mathsf S$ and the commutant of its unitary extension $\mathsf R$.
\end{abstract}
\maketitle

\section{Introduction}
A famous result of Brown and Halmos shows that a bounded linear operator $A$ on the Hardy space $H^2$ is a Toeplitz operator if and only if $M_z^*AM_z = A$, where $M_z$ denotes the unilateral shift. This led to the study of operators $A$ on general Hilbert spaces satisfying an operator equation of the form $T^*AT = A$ by many authors.

Recently, the study of Toeplitz operators has been extended to the Hardy space of the symmetrized bidisc, see \cite{BDS}. This motivates our study. The characterizations obtained in this note are in terms of Hilbert modules which is a natural settings for operator theory in several variables. Completely positive maps were used by Prunaru in \cite{PrunaruJFA} because they were well-suited for the Euclidean unit ball. We use both. It is this interplay of completely positive maps with Hilbert modules which allows the results to be presented in their most general and natural form.

Let $\mathbb D$ be the open unit disc while $\mathbb D^d$, $\overline{\mathbb D}^d$ and $\mathbb T^d$ denote the open polydisc, the closed polydisc, and the $d$-torus, respectively in $d$-dimensional complex space for $d\ge 2$.

If $T_1, T_2, \ldots , T_d$ are commuting contractions on a Hilbert space $\mathcal H$ and $P$ denotes the product $T_1 T_2 \ldots T_d$, then an application of the arguments in \cite{Muhly} shows that there is a non-trivial bounded operator $A$ on $\mathcal H$ satisfying $T_i^*AT_i = A$ for all $i=1,2, \ldots , d$ if and only if $P^n$ does not converge to the zero operator strongly. This may remind an astute reader of Corollary 2.2 in Prunaru (\cite{PrunaruJFA}) where he showed that a completely positive, completely contractive and ultraweakly continuous linear map on $\mathcal B ( \mathcal H)$ has a non-zero operator as a fixed point if and only if its powers at the identity operator converge strongly to the zero operator. Prunaru's results were attuned to the Euclidean unit ball whereas the case of commuting contractions is attuned to the polydisc. In this note, we discuss a third domain which has gained prominence over the last two decades.

Consider the elementary symmetric polynomials $e_k$ for $k=1,2, \ldots, d$ in $d$ variables:
$$ e_k(z_1, z_2, \ldots , z_d) = \sum_{1 \le j_1 < j_2 < \cdots <j_k \le d} z_{j_1} z_{j_2} \ldots z_{j_k}.$$
By convention, $e_0$ is the constant polynomial $1$. The closed symmetrized polydisc is the polynomially convex set
$$ \Gamma_d = \{ (e_1(\bm z), e_2(\bm z), \ldots , e_d(\bm z): \bm z=(z_1, z_2, \ldots , z_d) \in \overline{\mathbb D}^d\}.$$
A point of $\Gamma_d$ will usually be denoted by $(s_1, s_2, \ldots , p)$. The number $d \ge 2$ will be fixed for this paper.

Let $\mathsf S=(S_1,\dots,S_{d-1},P)$ be a  commuting $d$-tuple of bounded operators on a Hilbert space $\mathcal H$.  Let $\mathcal A = \mathbb C [z_1, z_2, \ldots , z_d]$ be the algebra of polynomials in $d$ commuting variables. Consider the $\mathcal A$-module structure induced on $\mathcal H$ as follows:
$$ f \cdot h = f(S_1,\dots,S_{d-1},P)h \text{ for } f \in \mathcal A \text{ and } h \in \mathcal H.$$
Let $\|f\|$ be the supremum norm of $f$ over $ \Gamma_d$. The $\mathcal A$-module $\mathcal H$ is called a {\em contractive Hilbert module} if
\begin{equation} \label{CM} \|f(S_1,\dots,S_{d-1},P)h\| \le \|f\|\|h\| \text{ for } f \in \mathcal A \text{ and } h \in \mathcal H.\end{equation}
 The polynomial convexity of $ \Gamma_d$ implies that \eqref{CM} holds for every function holomorphic in a neighbourhood of $ \Gamma_d$. We could call such an $\mathcal H$ a $\Gamma_d$-contractive Hilbert module because  $\Gamma_d$ is a spectral set for $\mathsf S$. However, in this note, the set $\Gamma_d$ is fixed. Hence, the brevity. The commuting tuple $\mathsf S$ satisfying \eqref{CM} is also known as a $\Gamma_d$-contraction, see \cite{BSR}.
 We shall write a Hilbert module as above as $(\cH, \mathsf S)$ because we shall often need the tuple $\mathsf S$. For an account of why the language of Hilbert modules is a natural one for non-trivial extension of operator theory results to several variables, see \cite{DP}. In the present context, see \cite{Sarkar}.

If $(\mathcal K , \mathsf R)$ is a contractive module as above where $\mathsf R = (R_1, \ldots , R_{d-1}, U)$ consists of normal operators and the Taylor joint spectrum $\sigma(\mathsf R)$ is contained in the distinguished boundary $b\Gamma_d = \{ (s_1,  \ldots , s_{d-1}, p) : |p| = 1\}$ (see Theorem 2.4 (iii) in \cite{BSR}), $(\mathcal K, \mathsf R)$ will be called a {\em unitary Hilbert module}. Let $(\mathcal M, \mathsf T)$ with $\mathsf T = (T_1, \ldots , T_{d-1}, V)$ be a submodule of a unitary Hilbert module $(\mathcal K , \mathsf R)$. Then it is called an {\em isometric Hilbert module}. The operator tuples $\mathsf R$ and $\mathsf T$ are called a $\Gamma_d$-unitary and a $\Gamma_d$-isometry respectively.

If $\mathsf S^* \bydef (S_1^*,\dots,S_{d-1}^*,P^*)$, then {\em the adjoint module} $(\cH, \mathsf S^*)$ is a contractive module when $(\cH, \mathsf S)$ is so. This is easy to see from the definition. Clearly, the adjoint module of a unitary module is again a unitary module. Obviously, the adjoint of an isometric module need not be an isometric module.

Given two Hilbert modules $(\cH, \mathsf S)$ and $(\cK, \mathsf R)$, a {\em Hilbert module homomorphism} $\fJ: \cH\to \cK$ is a bounded operator $\fJ$ such that
$$ \fJ ( f \cdot h ) = f \cdot \fJ h \text{ for all } f \in \mathcal A \text{ and } h \in \mathcal H.$$
Such a homomorphism will often be called a {\em module map}. If, moreover, $\fJ$ is a contraction, we say that the module $(\cH, \mathsf S)$ is contractively embedded as a submodule of $(\cK, \mathsf R)$. A contractive module map $\fJ : \cH \to \cK$ is called a {\em canonical module map} if $(\cK, \mathsf R)$ is {\em minimal in the sense that there is no submodule of $(\cK, \mathsf R)$ containing $(\cH, \mathsf S)$ and reducing $\mathsf R$} and
\begin{align}\label{ContEmbedd}
\fJ^*\fJ=\operatorname{SOT-}\lim P^{*n}P^n.
\end{align}

\begin{definition}
Let $(\mathcal H, \mathsf S)$ be a contractive Hilbert module. A bounded operator $A$ on $\mathcal H$ is said to be an {\em $\mathsf S$-Toeplitz operator} if it satisfies the {\em Brown-Halmos relations} with respect to $\mathsf S$:
\begin{eqnarray}\label{Gen-Brown-Halmos-Poly}
S_i^*AP=AS_{d-i} \text{ for each $1\leq i \leq d-1$ and } P^*AP = P.
\end{eqnarray}
The $*$--closed and norm closed vector space of all $\mathsf S$-Toeplitz operators is denoted by $\mathcal T(\mathsf S)$.
\end{definition}

See \cite{BDS} and \cite{DS} for the motivation of the definition above.

\begin{theorem}\label{Thm:Ext} Let $(\cH, \mathsf S)$ be a contractive Hilbert module. Then the following are equivalent.

 \begin{enumerate}
 \item $\mathcal T(\mathsf S)$ is non-trivial.
 \item $Q=\operatorname{SOT-}\lim P^{*n}P^n \neq 0$.
 \item $(\mathcal H, \mathsf S)$ can be canonically embedded as a submodule of a unitary module $(\mathcal K, \mathsf R)$, which is unique up to unitary isomorphism.
 \item $(\cH, \mathsf S)$ can be contractively embedded as a submodule of an isometric Hilbert module.

\end{enumerate}

Moreover, in this case, for any unitary module $(\cK',\mathsf R')$ and any contractive embedding $\fJ' : \cH \rightarrow \cK'$, the following are true.
\begin{enumerate}[label=(\roman*)]
\item $\fJ'^*\fJ'\leq\operatorname{SOT-}\lim P^{*n}P^n=\fJ^*\fJ$,

\item $(\cK, \mathsf R)$ is contractively embedded into $(\cK', \mathsf R')$ through a contraction $\fT:\cK\to\cK'$ such that $\fT\fJ=\fJ'$.
\end{enumerate}

\end{theorem}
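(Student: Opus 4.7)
The strategy is to use the asymptotic operator $Q=\operatorname{SOT-}\lim P^{*n}P^n$ as the bridge: I would prove (1)$\Rightarrow$(2) directly, (2)$\Rightarrow$(3) by an explicit construction on the range of $Q$, (3)$\Rightarrow$(4) trivially, (4)$\Rightarrow$(2) by a norm comparison, and finally obtain (2)$\Rightarrow$(1) from the canonical embedding produced in (2)$\Rightarrow$(3). For (1)$\Rightarrow$(2), iterating $P^*AP=A$ yields $P^{*n}AP^n=A$, so
\[
|\langle Ah,k\rangle|=|\langle AP^n h,P^n k\rangle|\le\|A\|\,\|P^n h\|\,\|P^n k\|
\]
for all $h,k\in\cH$ and $n\ge 0$; the hypothesis $Q=0$ forces $\|P^n h\|\to 0$ and hence $A=0$. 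For (4)$\Rightarrow$(2), any nonzero contractive module map $\fJ:\cH\to\cM$ into an isometric module (with $\fJ P=V\fJ$ and $V$ isometric) satisfies $\|\fJ h\|=\|V^n\fJ h\|=\|\fJ P^n h\|\le\|P^n h\|$ for every $n$, so $\fJ^*\fJ\le Q$ and $Q\ne 0$. The implication (3)$\Rightarrow$(4) is immediate, since a $\Gamma_d$-unitary module is a submodule of itself and hence isometric.

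The main work is (2)$\Rightarrow$(3). Since $S_i$ commutes with $P^n$, its adjoint commutes with $P^{*n}$, so $QS_i=S_iQ$ and $\ker Q$ is $\mathsf S$-invariant. Let $\cH_Q$ denote the completion of $\cH/\ker Q$ in the seminorm $\|h\|_Q:=\langle Qh,h\rangle^{1/2}$, and let $\fJ_0:\cH\to\cH_Q$ be the quotient map; then $\fJ_0$ is a contractive module map with $\fJ_0^*\fJ_0=Q$. The induced tuple $\widetilde{\mathsf S}$ on $\cH_Q$ is a $\Gamma_d$-contraction, because for every polynomial $f$,
\[
\|f(\widetilde{\mathsf S})\fJ_0 h\|_Q^2=\lim_n\|f(\mathsf S)P^n h\|^2\le\|f\|_{\Gamma_d}^2\lim_n\|P^n h\|^2=\|f\|_{\Gamma_d}^2\langle Qh,h\rangle,
\]
using that $P$ commutes with $f(\mathsf S)$ and that $\mathsf S$ is a $\Gamma_d$-contraction. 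The identity $P^*QP=Q$ further forces $\widetilde P$ to be an isometry, so $\widetilde{\mathsf S}$ is a $\Gamma_d$-isometry. Taking a minimal $\Gamma_d$-unitary extension $(\cK,\mathsf R)$ of $(\cH_Q,\widetilde{\mathsf S})$ via the classification in \cite{BSR} and composing with the inclusion $\cH_Q\hookrightarrow\cK$ yields the canonical embedding $\fJ$. With $\fJ$ in hand, $Q=\fJ^*\fJ$ is itself an $\mathsf S$-Toeplitz operator: invoking the $\Gamma_d$-unitary identity $R_i=R_{d-i}^*U$ from Theorem 2.4 of \cite{BSR} (hence $R_i^*U=R_{d-i}$),
\[
S_i^*QP=\fJ^*R_i^*U\fJ=\fJ^*R_{d-i}\fJ=QS_{d-i},\qquad P^*QP=\fJ^*U^*U\fJ=Q,
\]
which gives (2)$\Rightarrow$(1) and closes the cycle of equivalences.

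For the moreover part, (i) is the calculation in (4)$\Rightarrow$(2) applied to $\fJ'$. For (ii), I would define $\fT_0:\fJ\cH\to\cK'$ by $\fT_0\fJ h:=\fJ'h$; well-definedness and contractivity follow from $\fJ'^*\fJ'\le\fJ^*\fJ$, since $\fJ h=\fJ g$ forces $Q(h-g)=0$ and hence $\fJ'(h-g)=0$. Extending $\fT_0$ by continuity to $\overline{\fJ\cH}$ and then to the reducing submodule of $\cK$ generated by $\fJ\cH$ relies on the intertwining relation $\fT_0 R_i|_{\fJ\cH}=R_i'\fT_0$ together with the $\Gamma_d$-unitary identity $R_i^*=R_{d-i}U^{-1}$, which rewrites every adjoint in terms of $\mathsf R$ and $U^{-1}$; minimality of $(\cK,\mathsf R)$ then identifies this submodule with all of $\cK$, producing the desired contractive module map $\fT$ with $\fT\fJ=\fJ'$. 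The principal obstacles are the two constructive steps: producing the minimal $\Gamma_d$-unitary extension in (2)$\Rightarrow$(3) and verifying canonicity of $\fJ$, and the commutant-lifting-type extension of $\fT_0$ in (ii); the latter is where I expect the paper's announced commutant lifting theorem to do the essential work.
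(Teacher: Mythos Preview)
Your overall strategy coincides with the paper's: pass to the $Q$-seminorm quotient, show the induced tuple is a $\Gamma_d$-isometry, take its minimal $\Gamma_d$-unitary extension, and then read off that $Q=\fJ^*\fJ$ is itself $\mathsf S$-Toeplitz. Your completion $\cH_Q$ is unitarily the same as the paper's space $\overline{\operatorname{Ran}}\,Q$ with operators $T_j:Q^{1/2}h\mapsto Q^{1/2}S_jh$. The argument for the moreover clause is also essentially the paper's: they define $\fT$ densely by $f(\mathsf R,\mathsf R^*)\fJ h\mapsto f(\mathsf R',\mathsf R'^*)\fJ'h$ and check contractivity from (i) via the identity $\|f(\mathsf R,\mathsf R^*)\fJ h\|^2=\sum a_{\bm n,\bm m}\langle \mathsf S^{*\bm m}Q\mathsf S^{\bm n}h,h\rangle$. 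No commutant lifting is used here; the dependence runs the other way, with Theorem~\ref{Thm:PLT} derived \emph{from} Theorem~\ref{Thm:Ext}. The same density computation also handles the uniqueness in (3), which you left unaddressed.

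There are, however, two genuine gaps in your $(2)\Rightarrow(3)$.

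\textbf{The commutation claim is false.} From $S_iP^n=P^nS_i$ you get $S_i^*P^{*n}=P^{*n}S_i^*$, but this does not let you move $S_i$ past $P^{*n}P^n$, and in general $QS_i\ne S_iQ$. What is true, and all you actually need for the quotient construction, is $\langle QS_ih,S_ih\rangle=\lim_n\|S_iP^nh\|^2\le\|S_i\|^2\langle Qh,h\rangle$, so each $\widetilde S_i$ is well defined and bounded on $\cH_Q$. The paper never asserts $[Q,S_i]=0$; instead it proves the Brown--Halmos relation $S_i^*QP=QS_{d-i}$ directly, using the fundamental-operator identity $S_{d-i}-S_i^*P=D_PF_{d-i}D_P$ together with $P^{*j}D_PF_{d-i}D_PP^j\to 0$ strongly (Lemma~\ref{new-Gresult}).

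\textbf{``$\widetilde P$ is an isometry, so $\widetilde{\mathsf S}$ is a $\Gamma_d$-isometry'' is not immediate.} The characterization in \cite{BSR} (their Theorem~4.12, which the paper invokes) requires, beyond $\widetilde P$ being isometric, the identities $\widetilde S_i=\widetilde S_{d-i}^{\,*}\widetilde P$ and that $(\gamma_1\widetilde S_1,\dots,\gamma_{d-1}\widetilde S_{d-1})$ is a $\Gamma_{d-1}$-contraction. The second follows from your $\Gamma_d$-contraction estimate via Lemma~2.7 of \cite{BSR}. The first is exactly the Brown--Halmos relation $S_i^*QP=QS_{d-i}$ transported to $\cH_Q$; equivalently, one applies the fundamental-operator equation to $\widetilde{\mathsf S}$ and uses $D_{\widetilde P}=0$. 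Either route forces you through the fundamental-operator input that the paper isolates as its key lemma, so this step cannot be skipped.
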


This theorem is proved in section 2.  In Theorem \ref{Thm:Ext} above, $\mathcal T(\mathsf S)$ depends on the $S_i$ and $P$ whereas the condition (2) is in terms of $P$ alone. That is the surprising strength.

 The {\em Toeplitz $C^*$-algebra}, denoted by $C^*(I_{\cH},\cT(\mathsf S))$, is the $C^*$-algebra generated by $I_{\cH}$ and the vector space $\cT(\mathsf S)$ of $\mathsf S$-Toeplitz operators.

\begin{theorem}\label{Thm:ComLftPDisc}
Let $(\cH, \mathsf S)$ be a contractive Hilbert module satisfying
$$Q=\operatorname{SOT-}\lim P^{*n}P^n \neq 0.$$
Then  $(\cH, \mathsf S)$ can be canonically embedded as a submodule of a unitary module $(\cK,\mathsf R)$ by a module map $\fJ$ such that
\begin{enumerate}
\item the map $\rho$ defined on the commutant algebra $\{R_1,\dots,R_{d-1},U\}'$  by $$\rho(Y)=\fJ ^*Y\fJ ,$$ is a complete isometry onto $\mathcal T(\mathsf S)$;
\item there exists a surjective unital $*$-representation $$\pi:\mathcal C^*\{I_{\mathcal H}, \mathcal T(\mathsf S)\}\to \{R_1,\dots,R_{d-1},U\}'$$ such that $\pi \circ \rho =I;$
\item there exists a completely contractive, unital and multiplicative mapping
$$\Theta:\{S_1,\dots,S_{d-1},P\}'\to \{R_1,\dots,R_{d-1},U\}'$$
defined by $\Theta(X)=\pi(\fJ ^*\fJ X)$ which satisfies $$\Theta(X)\fJ =\fJ X.$$
\end{enumerate}
\end{theorem}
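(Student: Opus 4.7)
The plan is to exploit the canonical embedding $\fJ : \cH \to \cK$ from Theorem \ref{Thm:Ext}, which satisfies $\fJ^*\fJ = Q$ together with the intertwining relations $\fJ S_i = R_i\fJ$ and $\fJ P = U\fJ$. The central algebraic ingredient is the structural identity $R_i^* = U^* R_{d-i}$, which holds for any $\Gamma_d$-unitary by Theorem 2.4 of \cite{BSR} and has the important consequence that every $Y \in \{\mathsf R\}'$ automatically commutes with $\mathsf R^*$ as well (since $\mathsf R^*$ lies in the $*$-algebra generated by $\mathsf R$ and $U^*$).

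For (1), I first verify that $\rho(Y) = \fJ^* Y \fJ$ lies in $\cT(\mathsf S)$ by a direct computation:
$$S_i^* \rho(Y) P = \fJ^* R_i^* Y U \fJ = \fJ^* U^* R_{d-i} Y U \fJ = \fJ^* Y R_{d-i} \fJ = \rho(Y) S_{d-i},$$
where the steps use $S_i^* \fJ^* = \fJ^* R_i^*$, $\fJ P = U \fJ$, $R_i^* = U^* R_{d-i}$, and the commutativity of $Y$ with $R_{d-i}, U, U^*$; the relation $P^* \rho(Y) P = \rho(Y)$ is immediate. Complete contractivity is automatic since $\|\fJ\| \le 1$. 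For complete isometry, I use minimality of $(\cK, \mathsf R)$: since $\{\phi(\mathsf R, \mathsf R^*)\fJ h\}$ is dense in $\cK$ and $Y$ commutes with both $\mathsf R$ and $\mathsf R^*$, matrix-norm computations on $M_n(\cK)$ reduce to computations involving $\fJ(\cH)$ only. Surjectivity of $\rho$ onto $\cT(\mathsf S)$ is then established in the commutant-lifting spirit: given $A \in \cT(\mathsf S)$, construct $Y \in \{\mathsf R\}'$ by defining a sesquilinear form on the dense set $\{\phi(\mathsf R)\fJ h\}$ using the Brown--Halmos identities, and verify well-definedness, boundedness, and commutation with $\mathsf R$.

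For (2), $\pi$ is built as a unital $*$-homomorphic extension of $\rho^{-1}: \cT(\mathsf S) \to \{\mathsf R\}'$ to the generated $C^*$-algebra $C^*\{I_\cH, \cT(\mathsf S)\}$, whose existence uses the complete positivity of $\rho$, the von Neumann structure of $\{\mathsf R\}'$, and a Stinespring-type dilation argument; surjectivity of $\pi$ and $\pi \circ \rho = I$ hold by construction. For (3), one first checks that $QX \in \cT(\mathsf S)$ whenever $X \in \{\mathsf S\}'$, using $Q = \rho(I_\cK) \in \cT(\mathsf S)$ and the commutation of $X$ with $\mathsf S$. Unitality $\Theta(I) = \pi(Q) = I_\cK$ is then immediate. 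For the intertwining $\Theta(X) \fJ = \fJ X$, set $W := \Theta(X) \fJ - \fJ X$; one checks $\fJ^* W = QX - QX = 0$ and that $W$ intertwines $\mathsf S$ with $\mathsf R$. Exploiting $R_i^* = U^* R_{d-i}$ together with this intertwining yields $\langle \phi(\mathsf R, \mathsf R^*) W h, \fJ h' \rangle = 0$ for every polynomial $\phi$, so $\overline{C^*(\mathsf R) W h}$ is a reducing subspace of $\cK$ orthogonal to $\fJ(\cH)$; minimality forces it to vanish, whence $Wh = 0$ for every $h$. Multiplicativity of $\Theta$ follows from $\Theta(X_1 X_2)\fJ = \fJ X_1 X_2 = \Theta(X_1)\Theta(X_2)\fJ$ combined with the injectivity of $Z \mapsto Z\fJ$ on $\{\mathsf R\}'$ (again from minimality), and complete contractivity of $\Theta$ from that of $\pi$ together with $\|QX\| \le \|X\|$.

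The principal obstacle is the surjectivity of $\rho$ in (1): although morally a commutant-lifting statement, translating the Brown--Halmos identities into a concrete bounded operator on $\cK$ commuting with $\mathsf R$ requires careful manipulation with the $C^*$-algebraic structure of $\mathsf R$ acting on the dense set $\{\phi(\mathsf R) \fJ h\}$.
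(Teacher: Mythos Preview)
Your verification that $\rho$ maps $\{\mathsf R\}'$ into $\cT(\mathsf S)$, your injectivity/complete isometry argument for $\rho$ via minimality, and your argument for part (3) (the intertwining $\Theta(X)\fJ=\fJ X$ via the vanishing of $W=\Theta(X)\fJ-\fJ X$) are all essentially correct and pleasant.  However, two load-bearing steps are not actually carried out, and the paper's proof shows that they are where the real work lies.

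\textbf{Surjectivity of $\rho$ (part (1)).}  You propose to lift $A\in\cT(\mathsf S)$ to $Y\in\{\mathsf R\}'$ by a sesquilinear-form construction on the dense set $\{\phi(\mathsf R)\fJ h\}$, but you yourself flag this as ``the principal obstacle'' and do not verify boundedness or well-definedness.  This is genuinely hard to do bare-handed.  The paper avoids it entirely: it first invokes an Arveson--Prunaru idempotent completely positive map $\Phi:\cB(\cH)\to\cB(\cH)$ with $\operatorname{Ran}\Phi=\cT(P)$ (Toeplitz with respect to $P$ \emph{alone}), takes its \emph{minimal Stinespring dilation} $(\cK,\pi,\fJ)$, and defines $U=\pi(QP)$, $R_i=\pi(QS_i)$.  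In this setup $\pi$ is a $*$-representation from the outset, and one proves that $\rho:\{U\}'\to\cT(P)$ is a completely isometric bijection by comparing $\pi$ with the compression map and invoking a Kadison rigidity result.  Surjectivity of $\rho$ onto $\cT(\mathsf S)$ is then obtained \emph{a posteriori}: given $A\in\cT(\mathsf S)\subseteq\cT(P)$, the already-established surjectivity onto $\cT(P)$ produces $Y\in\{U\}'$ with $\rho(Y)=A$, and the extra Brown--Halmos relations force $Y$ to commute with each $R_i$.  The point is that the passage through $\cT(P)$ and the Stinespring machinery is what supplies the lift; your direct construction would have to reprove this.

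\textbf{Construction of $\pi$ (part (2)).}  You describe $\pi$ as a ``$*$-homomorphic extension of $\rho^{-1}$'' obtained by ``a Stinespring-type dilation argument'', but this is not a proof.  The map $\rho^{-1}:\cT(\mathsf S)\to\{\mathsf R\}'$ is a complete isometry between an operator system (not containing $I_\cH$ unless $P$ is an isometry) and a $C^*$-algebra; there is no general principle that extends such a map to a $*$-homomorphism on the generated $C^*$-algebra.  In the paper $\pi$ is not constructed after the fact: it \emph{is} the Stinespring representation of the Choi--Effros idempotent $\Phi$, and the identities $\operatorname{Ker}\pi=\operatorname{Ker}\Phi$ and $\pi\circ\rho=I$ fall out of that structure.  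Without introducing $\Phi$ (or an equivalent device) you have no candidate for $\pi$, and hence no definition of $\Theta(X)=\pi(QX)$ in part (3) either.

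In short, the missing idea is the Arveson--Prunaru completely positive projection onto $\cT(P)$ and its minimal Stinespring dilation; this is what simultaneously manufactures $\pi$, identifies $\cK$ as the minimal $U$-reducing extension, and delivers surjectivity of $\rho$.
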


When the contractive module above is an isometric one, the following stronger version holds.

\begin{theorem}\label{Thm:Gamma_n isometry}
Let $(\mathcal H , \mathsf S)$ be an isometric Hilbert module.
\begin{enumerate}
\item[(1)] There exists a unitary module $ (\mathcal K, \mathsf R )$ containing $(\mathcal H , \mathsf S)$ as an isometrically embedded submodule such that $\mathsf R$ is the minimal extension of
$\mathsf S$. In fact, $\mathcal K$ is the span closure of the following elements:
$$\{U^{ m}h:h\in\mathcal H, \text{ and } m \in \mathbb Z\}.
$$
    Moreover, any operator $X$ acting on $\mathcal H$ commutes with $\mathsf S$ if and only if
    $X$ has a unique norm preserving extension $Y$ acting on $\mathcal K$ commuting with $\mathsf R$.
\item[(2)] An operator $X$ is in $\mathcal T(\mathsf S)$ if and only if there exists a unique operator $Y$ in the commutant of the von-Neumann algebra generated by
$\{R_1,\dots,R_{d-1},U\}$ such that $\|X\|=\|Y\|$ and
    $
    X=P_\mathcal HY|_{\mathcal H}.
    $
\item[(3)] Let $\mathcal C^*(\mathsf S)$ and $\mathcal C^*(\mathsf R)$ denote the unital $\mathcal C^*$-algebras generated by
$\mathsf S$ and $\mathsf R$ respectively and $\mathcal I(\mathsf S)$
denote the closed ideal of $\mathcal C^*(\mathsf S)$ generated by all the commutators $XY-YX$ for
 $X,Y\in \mathcal C^*(\mathsf S)\cap \mathcal T(\mathsf S)$.
Then there exists a short exact sequence
    $$
    0\rightarrow\mathcal I(\mathsf S)\hookrightarrow \mathcal C^*(\mathsf S)\xrightarrow{\pi_0} \mathcal C^*(\mathsf R)\rightarrow 0
    $$
    with a completely isometric cross section, where $\pi_0: \mathcal C^*(\mathsf S)\to \mathcal C^*(\mathsf R)$ is the canonical unital $*$-homomorphism which sends the generating set $\mathsf S$
    to the corresponding generating set $\mathsf R$, i.e., $\pi_0(P)=U$ and $\pi_0(S_i)=R_i$ for all $1\leq i \leq d-1$.
\end{enumerate}
\end{theorem}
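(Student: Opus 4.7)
My plan is to prove the three assertions in order, relying on Theorem~\ref{Thm:ComLftPDisc} for parts (2) and (3). For (1), I start from an arbitrary unitary module $(\cK_0,\mathsf R_0)$ in which $(\cH,\mathsf S)$ sits as a submodule, and set $\cK$ to be the closed linear span of $\{U_0^m h : h \in \cH,\, m \in \mathbb Z\}$. Invariance of $\cK$ under each $R_{0,i}$ and its adjoint follows from the $\Gamma_d$-unitary relation $R_{d-i} = R_i^* U$ (which on $\mathbb T^d$ is the identity $e_{d-i} = p\,\overline{e_i}$): one has $R_{0,i} U_0^m h = U_0^m S_i h$ and $R_{0,i}^* U_0^m h = U_0^{m-1} S_{d-i} h$, both in $\cK$. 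So $\cK$ reduces $\mathsf R_0$; the restriction $\mathsf R$ is a $\Gamma_d$-unitary, and minimality is built into the construction. For the commutant extension, given $X \in \{\mathsf S\}'$ I would define $Y$ on the dense spanning set by $Y(U^m h) = U^m X h$. Well-definedness and the bound $\|Y\| \le \|X\|$ rest on a translation trick: for $v = \sum U^{m_k} h_k$ and $N$ with $N + m_k \ge 0$ for all $k$, the vector $U^N v = \sum P^{N+m_k} h_k$ lies in $\cH$, and since $X$ commutes with $P = U|_{\cH}$,
\[
U^N Y v \;=\; \sum P^{N+m_k} X h_k \;=\; X\,U^N v,
\]
so $\|Y v\| = \|X U^N v\| \le \|X\|\,\|v\|$ by the unitarity of $U$. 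Commutation $[Y,R_i]=0$ reduces similarly to $[X,S_i]=0$ together with $[R_i,U]=0$ and $R_i|_{\cH}=S_i$; uniqueness holds because any competing extension differs by an operator commuting with $U$ and vanishing on $\cH$. The converse direction of the equivalence is immediate: if $Y$ extends $X$ on $\cH$ and commutes with $\mathsf R$, then for $h\in\cH$ one computes $X S_i h = Y R_i h = R_i Y h = S_i X h$.

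Part (2) is essentially an instance of Theorem~\ref{Thm:ComLftPDisc}(1). Since $P$ is an isometry, $\operatorname{SOT-}\lim P^{*n} P^n = I_{\cH}$, so the canonical embedding $\fJ$ is an isometry and $\cH$ may be identified with its image in $\cK$, giving $\fJ^* = P_{\cH}$. Theorem~\ref{Thm:ComLftPDisc}(1) then asserts that $\rho(Y) = P_{\cH} Y|_{\cH}$ is a complete isometry from $\{\mathsf R\}'$ onto $\cT(\mathsf S)$, which already encodes the uniqueness and norm preservation required. The additional observation needed is that $\{\mathsf R\}'$ coincides with the commutant of the von Neumann algebra generated by $\mathsf R$: because $U$ is unitary and $R_i^* = R_{d-i} U^*$ lies in the $*$-algebra generated by $\mathsf R$, anything commuting with $\mathsf R$ automatically commutes with the whole von Neumann algebra.

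For part (3), I would construct the surjective unital $*$-homomorphism $\pi_0 : \cC^*(\mathsf S) \to \cC^*(\mathsf R)$ by sending $S_i \mapsto R_i$ and $P \mapsto U$. Its existence as a $*$-homomorphism on the whole $C^*$-algebra comes from the fact that $\cC^*(\mathsf R)$ is commutative (as $R_1,\dots,R_{d-1},U$ are mutually commuting normals with joint spectrum in $b\Gamma_d$) and from the compression relation $p(\mathsf R) \mapsto P_{\cH} p(\mathsf R)|_{\cH} = p(\mathsf S)$ on polynomials; the passage to $*$-polynomials uses that each $R_i^*$ is itself a polynomial in $\mathsf R$ up to a factor of $U^{-1}$. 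Commutativity of $\cC^*(\mathsf R)$ forces $\pi_0$ to kill every commutator of Toeplitz operators, giving $\cI(\mathsf S) \subseteq \ker \pi_0$. The completely isometric cross section is $\sigma(Y) = P_{\cH} Y|_{\cH}$, which is defined on $\cC^*(\mathsf R)$ (contained in the commutant of the von Neumann algebra generated by $\mathsf R$ by commutativity) and is completely isometric by part (2); a check on generators gives $\pi_0 \circ \sigma = \operatorname{id}$. The main obstacle I foresee is the reverse inclusion $\ker \pi_0 \subseteq \cI(\mathsf S)$, which should follow from the splitting induced by the cross section: modulo $\cI(\mathsf S)$, every element of $\cC^*(\mathsf S)$ becomes an element of $\sigma(\cC^*(\mathsf R))$ up to terms that $\pi_0$ sends to zero, and $\pi_0\circ\sigma = \operatorname{id}$ then forces $\ker\pi_0$ to lie inside $\cI(\mathsf S)$.
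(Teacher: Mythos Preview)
Your arguments for parts (1) and (2) are correct. For part (1) you take a genuinely different route from the paper: the paper deduces everything from the Stinespring package of Theorem~\ref{Thm:ComLftPDisc} (since $Q=I$ the embedding $\fJ$ is isometric, and the commutant extension is $Y=\Theta(X)=\pi(X)$), whereas you build the minimal unitary extension by hand from an arbitrary ambient unitary module and obtain the commutant lift by the translation trick $U^N Yv = X\,U^N v$. Your approach is more elementary and self-contained; the paper's approach has the advantage that the same $\pi$ immediately feeds into parts (2) and (3). For part (2) you and the paper do essentially the same thing (invoke Theorem~\ref{Thm:ComLftPDisc}(1) and observe that $\{\mathsf R\}'$ equals the commutant of the von Neumann algebra because $R_i^*=R_{d-i}U^*$). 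One small point you should make explicit is that the $(\cK,\mathsf R)$ you constructed in (1) agrees, up to the unitary isomorphism of Theorem~\ref{Thm:Ext}(3), with the one produced by Theorem~\ref{Thm:ComLftPDisc}.

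There is, however, a genuine gap in part (3): your justification for the \emph{existence} of $\pi_0$ does not work. The compression relation $p(\mathsf R)\mapsto P_{\cH}p(\mathsf R)|_{\cH}=p(\mathsf S)$ defines a map in the wrong direction (from $\cC^*(\mathsf R)$ to $\cB(\cH)$), and there is no obvious reason why every $*$-polynomial relation among the $S_i,S_i^*,P,P^*$ should also hold among the $R_i,R_i^*,U,U^*$: normal ordering in $\cC^*(\mathsf R)$ changes the polynomial, and the normal-ordered version evaluated at $\mathsf S$ is not the same operator you started with. The paper resolves this cleanly by setting $\pi_0:=\pi|_{\cC^*(\mathsf S)}$, where $\pi$ is the unital $*$-representation supplied by Theorem~\ref{Thm:ComLftPDisc}(2); since $Q=I$ one has $\pi(S_i)=\Theta(S_i)=R_i$ and $\pi(P)=U$, so $\pi_0$ is automatically a well-defined $*$-homomorphism with the required values on generators. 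You already invoke Theorem~\ref{Thm:ComLftPDisc} in part (2), so you should use its $\pi$ here as well. Once $\pi_0$ exists, your splitting argument for $\ker\pi_0\subseteq\cI(\mathsf S)$ is essentially the paper's ``commutator manipulation'': every $*$-monomial in $\mathsf S,\mathsf S^*$ equals its normal ordering $\mathsf S^{*\alpha}\mathsf S^{\beta}=\sigma(\mathsf R^{*\alpha}\mathsf R^{\beta})$ modulo $\cI(\mathsf S)$ (this uses that each $S_i,P$ and hence each $S_i^*,P^*$ lies in $\cT(\mathsf S)$), so $\cC^*(\mathsf S)=\sigma(\cC^*(\mathsf R))+\cI(\mathsf S)$ on a dense set, and $\pi_0\circ\sigma=\operatorname{id}$ finishes the job. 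The paper phrases this via $\ker\pi_0=\ker\Phi_0=\{Z-\Phi_0(Z):Z\in\cC^*(\mathsf S)\}$ and the action of $\Phi_0$, but the content is the same.
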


The proof of this is in Section 3. As an application of the above, we obtain a version of the commutant lifting theorem in Section 4.

\begin{theorem}\label{Thm:PLT}
Let $(\cH, \mathsf S)$ be a contractive Hilbert module. Let $\fJ : \cH \to \cK$ be a canonical embedding of $(\cH, \mathsf S)$ as a submodule of a unitary Hilbert module $(\cK,\mathsf R)$. Let $X$ be in the commutant of $\mathsf S$. Then the module $(\cH, X)$ can be contractively embedded as a submodule of $(\cK, Y)$ for an $Y$ in the commutant of $\mathsf R$ and  $\|Y\|\leq \|X\|$.
\end{theorem}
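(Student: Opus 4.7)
The plan is to derive Theorem~\ref{Thm:PLT} as an almost immediate consequence of Theorem~\ref{Thm:ComLftPDisc}. Since $(\cH,\mathsf S)$ admits a canonical embedding $\fJ$ into a unitary Hilbert module $(\cK,\mathsf R)$, the equivalence of (2) and (3) in Theorem~\ref{Thm:Ext} forces
$$
Q = \operatorname{SOT-}\lim P^{*n}P^n \neq 0,
$$
so the hypotheses of Theorem~\ref{Thm:ComLftPDisc} are met. The uniqueness up to unitary isomorphism from Theorem~\ref{Thm:Ext}(3) allows me to identify $\fJ$ with the canonical embedding produced in Theorem~\ref{Thm:ComLftPDisc}, giving access to the completely contractive, unital, multiplicative map
$$
\Theta : \{S_1,\dots,S_{d-1},P\}' \longrightarrow \{R_1,\dots,R_{d-1},U\}'
$$
from item (3) there, which satisfies $\Theta(Z)\fJ = \fJ Z$ for every $Z$ commuting with $\mathsf S$.

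Given $X$ in the commutant of $\mathsf S$, I would set $Y := \Theta(X)$ and then make three observations. First, $Y$ lies in the commutant of $\mathsf R$ by the very definition of the codomain of $\Theta$. Second, $\|Y\| = \|\Theta(X)\| \le \|X\|$ by the (complete) contractivity of $\Theta$. Third, the intertwining relation $\Theta(X)\fJ = \fJ X$ reads as
$$
Y\fJ = \fJ X,
$$
which is exactly the statement that $\fJ$ is a Hilbert module homomorphism from $(\cH, X)$ to $(\cK, Y)$. Since $\fJ$ is already a contraction --- as a canonical embedding we have $\fJ^*\fJ = Q \le I$ --- this presents $(\cH, X)$ as a contractively embedded submodule of $(\cK, Y)$, which is the desired conclusion.

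The reason there is essentially nothing new to prove here is that all the substantive work has already been absorbed into the construction of $\Theta$ in Theorem~\ref{Thm:ComLftPDisc}: producing a completely contractive, multiplicative, unital map on the commutant of $\mathsf S$ that both intertwines through $\fJ$ and takes values in the commutant of $\mathsf R$ is precisely a commutant lifting assertion in disguise. The only small wrinkle --- the main obstacle, such as it is --- is justifying that the $\fJ$ appearing in the hypothesis may be taken to be the one used to define $\Theta$; the uniqueness clause of Theorem~\ref{Thm:Ext}(3) handles this, since any two canonical embeddings differ by a unitary module isomorphism of the respective unitary modules, which conjugates $\Theta$ while preserving both the norm bound and the intertwining relation.
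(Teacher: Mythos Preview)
Your proof is correct but takes a different route from the paper. The paper does not invoke Theorem~\ref{Thm:ComLftPDisc} at all; instead it works with the explicit $Q^{1/2}$ model from the proof of Theorem~\ref{Thm:Ext}: it first shows by a direct inner-product estimate that $Q^{1/2}h \mapsto Q^{1/2}Xh$ defines a contraction $\tilde X$ on $\overline{\operatorname{Ran}}Q$ commuting with the intermediate $\Gamma_d$-isometry $\mathsf T = (T_1,\dots,T_{d-1},V)$, and then appeals to the commutant extension in part~(1) of Theorem~\ref{Thm:Gamma_n isometry} to lift $\tilde X$ to a $Y$ in the commutant of $\mathsf R$ with $\|Y\| = \|\tilde X\| \le \|X\|$ and $Y\fJ = \fJ X$. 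Your argument is shorter and cleaner, recognizing that the map $\Theta$ of Theorem~\ref{Thm:ComLftPDisc}(3) already packages exactly the commutant lifting statement one needs; the paper's argument, by contrast, is more elementary in that it avoids the Stinespring machinery underlying Theorem~\ref{Thm:ComLftPDisc} and stays with the hands-on $Q^{1/2}$ construction, making Theorem~\ref{Thm:PLT} depend only on Theorems~\ref{Thm:Ext} and~\ref{Thm:Gamma_n isometry}.
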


\section{Proof of Theorem \ref{Thm:Ext} and relation to quotient modules}

Fix a contractive Hilbert module $(\cH, \mathsf S)$ or, in other words, a $\Gamma_d$-contraction $\mathsf S=(S_1,\dots,S_{d-1},P)$. It is well-known that
$$S_i - S_{d-i}^*P = D_P F_i D_P; \;\; i=1,2, \ldots , d-1$$
for a certain operator tuple $(F_1, F_2, \ldots , F_{d-1})$. The operators $F_i$ are called the {\em fundamental operators}. The existence was discovered for $d=2$ in \cite{BhPSR}. For a proof for $d > 2$, see \cite{APal} or \cite{SouravArxiv}. Alternatively, it is easily deducible from Proposition 2.5(3) of \cite{SouravCAOT}.

\begin{lemma}\label{new-Gresult}
For each $i=1,2,\dots d-1$, we have
\begin{align}\label{Gresult}
{P^*}^j(S_{d-i}-S_i^*P)P^j\to0 \quad\text{strongly as}\quad j\to\infty.
\end{align}
\end{lemma}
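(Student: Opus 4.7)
The plan is to exploit the fundamental-operator identity stated just before the lemma, and then reduce the whole claim to the statement that $D_P P^j \to 0$ strongly, which is a classical contraction-theory observation.

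First I would rewrite the identity $S_i - S_{d-i}^{*}P = D_P F_i D_P$ with the index $i$ replaced by $d-i$ (permissible because the map $i \mapsto d-i$ is a bijection of $\{1,2,\dots,d-1\}$ onto itself). This yields
\begin{equation*}
S_{d-i} - S_i^{*} P = D_P F_{d-i} D_P,
\end{equation*}
so that
\begin{equation*}
P^{*j}(S_{d-i}-S_i^{*}P)P^j = P^{*j} D_P F_{d-i} D_P P^j.
\end{equation*}
This already isolates the only nontrivial piece, namely $D_P P^j$.

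Next I would observe that $P$ is a contraction, since $\Gamma_d$ is a spectral set for $\mathsf S$ and the coordinate function $p$ satisfies $|p|\le 1$ on $\Gamma_d$. Hence for every $h \in \cH$ the sequence $\|P^j h\|^2$ is monotonically non-increasing in $j$ and bounded below by $0$, so it converges. Since
\begin{equation*}
\|D_P P^j h\|^2 = \langle (I-P^{*}P) P^j h, P^j h\rangle = \|P^j h\|^2 - \|P^{j+1}h\|^2,
\end{equation*}
it follows that $\|D_P P^j h\| \to 0$, i.e., $D_P P^j \to 0$ strongly.

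Finally, using $\|P^{*j}\|\le 1$ and boundedness of $F_{d-i}$, I would conclude with the estimate
\begin{equation*}
\|P^{*j}(S_{d-i}-S_i^{*}P)P^j h\| \le \|F_{d-i}\|\cdot\|D_P P^j h\| \longrightarrow 0,
\end{equation*}
which gives the desired strong convergence. There is no real obstacle here; the only ingredient one needs to remember is the existence of the fundamental operators so that the inner factor $S_{d-i}-S_i^{*}P$ is squeezed between two $D_P$'s, after which contractivity of $P$ does all the work.
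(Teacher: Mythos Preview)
Your proof is correct and essentially identical to the paper's: both use the fundamental-operator identity to write $S_{d-i}-S_i^*P=D_PF_{d-i}D_P$ and then bound the resulting expression by a constant times $\|D_PP^jh\|$, which tends to zero because $\|P^jh\|^2-\|P^{j+1}h\|^2\to 0$. The only cosmetic difference is that the paper squares the norm before estimating, while you estimate the norm directly using $\|P^{*j}\|,\|D_P\|\le 1$.
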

\begin{proof}
For every $h \in \mathcal H$, we have
\begin{eqnarray*}
\|P^{* j}(S_{d-i}-S_i^*P)P^jh\|^2&=&\|P^{* j}(D_PF_{d-i}D_P)P^jh\|^2
\\&\leq& \|F_{d-i}\|^2\|D_PP^jh\|^2=\|F_{d-i}\|^2(\|P^jh\|^2-\|P^{j+1}h\|^2).
\end{eqnarray*}The proof follows because the last term in bracket converges to zero as $j\to\infty$. \end{proof}

To prove Theorem \ref{Thm:Ext}, we shall take the path $(1)\Rightarrow(2)\Rightarrow(3)\Rightarrow(4)\Rightarrow(1)$.

\begin{proof}[{\bf Proof of $(1)\Rightarrow(2)$:}]
Let there be a non-zero $\mathsf S$-Toeplitz operator $A$. This, in particular, implies that for all $n \geq 0$ we have $A = P^{*n}AP^n$ and hence $\| Ah \| \le \| A\| \| P^nh\| $ for every vector $h$. So, if $P^n$ strongly converges to $0$, then $A = 0$ which is a contradiction.

\noindent{\bf Proof of $(2)\Rightarrow(3)$:} Assume $P^n\nrightarrow0$ strongly. As $P$ is a contraction
$$
 I_{\mathcal{H}}\succeq P^*P\succeq  P^{*2}P^2\succeq \cdots\succeq {P^*}^nP^n\succeq \cdots\succeq 0.
$$ This guarantees a positive non-zero contraction $Q$ such that
\begin{align}\label{assymplimit}
Q=\operatorname{SOT-}\lim P^{*n}P^n.
\end{align}
Clearly,
$$P^*QP= Q.$$ Hence we can define an isometry $V : \overline{\operatorname{Ran}}Q \rightarrow \overline{\operatorname{Ran} }Q $ satisfying
\begin{align}\label{X}
V:Q^\frac{1}{2}h \mapsto Q^\frac{1}{2}Ph \text{ for each }h\in\cH.
\end{align}
We now prove that $Q$ is an $\mathsf S$-Toeplitz operator. Indeed,
\begin{align*}
S_i^*QP-QS_{d-i}=\lim_j (S_i^*P^{* j}P^jP-P^{* j}P^jS_{d-i})=\lim_jP^{* j}(S_i^*P-S_{d-i})P^j.
\end{align*}
By \eqref{Gresult}, the limit above is zero and hence $S_i^*QP=QS_{d-i}$. Define operators $T_j:\overline{\operatorname{Ran}}Q \rightarrow \overline{\operatorname{Ran}}Q$ for $j=1,2,\dots, d-1$ as
\begin{align}\label{Xj}
T_j:Q^\frac{1}{2}h \mapsto Q^\frac{1}{2}S_jh.
\end{align}
It is straightforward to see that each $T_j$ is well-defined. The tuple $(T_1, \ldots , T_{d-1}, V)$ is a commuting tuple too.
 The computation below establishes the identities $T_i=T_{d-i}^*V$ for each $i=1,2,\dots,d-1$:
\begin{align*}
\langle T_{d-i}^*VQ^\frac{1}{2}h,Q^\frac{1}{2}h' \rangle=
\langle Q^\frac{1}{2}Ph,Q^\frac{1}{2}S_{d-i}h' \rangle= \langle QS_ih,h'\rangle=\langle T_iQ^\frac{1}{2}h,Q^\frac{1}{2}h'\rangle,
\end{align*}where to obtain the third equality, we use the fact that $Q$ is an $\mathsf S$-Toeplitz operator. We have already noted that $V$ is an isometry. Thus by Theorem 4.12 of \cite{BSR}, all we need to show to conclude that $(T_1,\dots,T_{d-1},V)$ is a $\Gamma_d$-isometry is that the $d-1$ tuple $(\gamma_1T_1,\dots,\gamma_{d-1}T_{d-1})$ is a $\Gamma_{d-1}$-contraction, where $\gamma_j=(d-j)/d$ for each $j=1,2,\dots,d-1$. This readily follows from the identity
$$
\xi(\gamma_1T_1,\dots,\gamma_{d-1}T_{d-1})Q^\frac{1}{2}=\xi(\gamma_1S_1,\dots,\gamma_{d-1}S_{d-1})Q^\frac{1}{2}
$$for every polynomial $\xi$ in $\bC[z_1,z_2,\dots,z_{d-1}]$, and the fact that $(\gamma_1S_1,\dots,\gamma_{d-1}S_{d-1})$ is a $\Gamma_{d-1}$-contraction where $\gamma_j = (d-j)/d$.

Let $\mathsf R=(R_1,\dots,R_{d-1},U)$ acting on $\cK$ be a minimal $\Gamma_d$-unitary extension of the $\Gamma_d$-isometry $\mathsf T=(T_1,\dots,T_{d-1},V)$. Define a contraction $\fJ:\cH\to\cK$ as
\begin{align*}
\fJ :h\mapsto Q^\frac{1}{2}h \text{ for every } h \in \cH.
\end{align*}
This is a module homomorphism because
\begin{align}\label{Int}
R_j\fJ h=R_jQ^\frac{1}{2}h=T_jQ^\frac{1}{2}h=Q^\frac{1}{2}S_jh=\fJ S_jh.
\end{align}Similarly, $U\fJ=\fJ P$. Finally, by definition of $\fJ$ and $Q$, it follows that $\fJ^*\fJ$ is the limit of $P^{*n}P^n$ in the strong operator topology.

For the uniqueness part, consider
$$(\fJ,\cK,\mathsf R)=( R_1,\dots, R_{d-1},U)$$
as above and
$$(\fJ',\cK',{\mathsf R'})=( R_1',\dots, R_{d-1}', U')$$
with similar properties. Define the operator $\tau:\mathcal K \to \mathcal K'$ densely by
$$
\tau:f(\mathsf R, \mathsf R^*)\fJ h\mapsto f(\mathsf R', \mathsf R'^*) \fJ  h
$$
for every $h\in \mathcal H$ and polynomial $f$ in $\bm z$ and $\overline{\bm z}$. The map $\tau$ is surjective by minimality. Note that $\tau$ clearly satisfies $\tau\fJ=\fJ'$. We will be done if we can show that $\tau$ is an isometry. Let $f$ be a polynomial in $\bm z$ and $\overline{\bm z}$ and $\bar{f}f=\sum a_{\bm n,\bm m}\bm z^{\bm n}\overline{\bm z}^{\bm m}$. Then for every $h\in \cH$,
\begin{align}\label{UniqComp}
\notag \|f(\mathsf R, \mathsf R^*)\fJ h\|^2
 \notag & = \sum a_{\bm n,\bm m}\langle \fJ^*\mathsf R^{* \bm m}\mathsf R^{\bm n}\fJ h, h\rangle\\
\notag & = \sum a_{\bm n,\bm m}\langle \mathsf S^{* \bm m}\fJ^*\fJ\mathsf S^{\bm n}h, h\rangle\\
 & = \sum a_{\bm n,\bm m}\langle \mathsf S^{* \bm m} Q\mathsf S^{\bm n}h, h\rangle.
\end{align}
Since the last term depends on $\mathsf S$ only, $\tau$ is an isometry.

\noindent{\bf Proof of $(3)\Rightarrow(4)$:} Obvious.

\noindent{\bf Proof of $(4)\Rightarrow(1)$:} Let $(\mathcal M, \mathsf T)$ be the isometric module with $\mathsf T = (T_1, \ldots , T_{d-1}, V)$. In this case, we have
$$
P^*\fJ^*\fJ P=\fJ^*V^*V\fJ=\fJ^*\fJ
$$and
$$ S_{d-j}^* \fJ^*\fJ P = \fJ^* T_{d-j}^*V \fJ = \fJ^*T_j \fJ=\fJ^*\fJ S_j.$$
This proves that the non-zero operator $\fJ^* \fJ$ belongs to $\mathcal T(\mathsf S)$. This in particular establishes that (4) implies (1).

To complete the proof, first note that the proof of $(3)\Rightarrow(1)$ above implies that $\fJ'^*\fJ'$ is an $\mathsf S$-Toeplitz operator. In particular, $\fJ'^* \fJ'$ is in $\cT(P)$. This implies
 \begin{align*}
 \fJ'^*\fJ'=P^{*n}\fJ'^*\fJ' P^n \leq P^{* n}P^n \text{ for every } n.
 \end{align*}
This proves part (i).

For part (ii) we define the operator $\fT:\mathcal K \to \cK'$ densely by
$$
\fT:f(\mathsf R, \mathsf R^*)\fJ h\mapsto f(\mathsf R', \mathsf R'^*) \fJ'  h
$$
for every $h\in \mathcal H$ and polynomial $f$ in $\bm z$ and $\overline{\bm z}$. Using part (i), a similar computation as done in \eqref{UniqComp} yields
\begin{align*}
 \|f(\mathsf R', \mathsf R'^*)\fJ' h\|\leq  \|f(\mathsf R, \mathsf R^*)\fJ h\| \text{ for every }h\in\cH.
\end{align*}
This shows that $\fT$ is not only well-defined but also a contraction. Finally, it readily follows from the definition of $\fT$ that it intertwines $\mathsf R$ and $\mathsf R'$ and that $\fT\fJ=\fJ'$.
\end{proof}

\begin{remark}\label{R:Can-nonCan}
It is known that the minimal unitary (or isometric) dilation space of a contraction acting on a Hilbert space is always infinite dimensional even in the case of matrices. In contrast, if $\cH$ is finite dimensional, the isometric module $(T_1,\dots,T_{d-1}, V)$ defined in \eqref{Xj} acts on a finite dimensional space, viz., $\overline{\operatorname{Ran}}Q$ and hence is a $\Gamma_d$-unitary.

\end{remark}

Biswas and Shyam Roy showed in \cite{BSR} that an isometric Hilbert module $(\mathcal M, \mathsf T)$ always decomposes (up to unitary equivalence) as the direct sum of a pure isometric Hilbert module and a unitary Hilber module, i.e., $\mathcal M = H^2(\cE) \oplus \cK$ where $\cE$ is a Hilbert space, $T_i = M_{A_i + A_{n-i}^*z} \oplus R_i$ for $i=1,2, \ldots , d-1$ and $V = M_z \oplus U$ for certain $A_i \in \cB(\cE)$ and $(\cK,\mathsf R)$ is a unitary Hilbert module, called the {\em unitary part} of  $(\mathcal M, \mathsf T)$.

If a Hilbert module $(\cH, \mathsf S)$ is isomorphic to $(\mathcal M, \mathsf T)$ quotiented by a submodule $(\mathcal M', \mathsf T|_{\mathcal M'})$, then   $(\cH, \mathsf S)$ is said to be realized as a {\em quotient module} or equivalently $(\cH, \mathsf S)$ is said to have a {\em co-extension}  $(\mathcal M, \mathsf T)$ because there is a co-isometric module map $L : \mathcal M \rightarrow \cH$; see \cite{DMS}. The module $(\mathcal M, \mathsf T)$ is said to be {\em minimal} over $(\cH, \mathsf S)$ if there is no proper submodule of $(\mathcal M, \mathsf T)$ which contains $(\cH, \mathsf S)$ and reduces $\mathsf T$.

\begin{lemma} \label{non-canonical}
If a contractive Hilbert module $(\cH, \mathsf S)$ can be realized as a quotient module of a minimal isometric module $(\mathcal M, \mathsf T)$ which has a non-trivial unitary part $(\cK, \mathsf R)$, then there is a (not necessarily canonical) contractive embedding of the adjoint module $(\cH, \mathsf S^*)$ in $(\cK, \mathsf R)$.
\end{lemma}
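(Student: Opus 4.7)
The plan is to produce the desired contractive embedding by composing the inclusion of $\cH$ (realized as the coinvariant subspace of $\cM$) with the orthogonal projection onto the unitary summand $\cK$ furnished by the Biswas--Shyam Roy decomposition recalled just above the lemma. First I would identify $\cH$ with $\cM\ominus\cM'$: the hypothesis that $(\cH,\mathsf S)$ is the quotient of $(\cM,\mathsf T)$ by $(\cM',\mathsf T|_{\cM'})$ provides a coisometric module map $L\colon\cM\to\cH$ whose adjoint $\iota_0:=L^*\colon\cH\hookrightarrow\cM$ is an isometric embedding with image $\cM\ominus\cM'$. Because $\cM'$ is a submodule, its orthogonal complement is coinvariant for $\mathsf T$, so $\iota_0$ intertwines adjoint actions:
$$
\iota_0 S_i^* = T_i^*\iota_0 \quad (1\le i\le d-1), \qquad \iota_0 P^* = V^*\iota_0.
$$

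Next I would exploit the Wold-type decomposition $\cM=H^2(\cE)\oplus\cK$ with $T_i=M_{A_i+A_{d-i}^*z}\oplus R_i$ and $V=M_z\oplus U$. In particular $\cK$ is a reducing subspace for $\mathsf T$, so the orthogonal projection $P_\cK\colon\cM\to\cK$ commutes with every $T_i^*$ and with $V^*$, giving $P_\cK T_i^* = R_i^* P_\cK$ and $P_\cK V^* = U^* P_\cK$. I would then set
$$
\iota := P_\cK\,\iota_0\colon \cH\longrightarrow \cK.
$$
This is a contraction as a composition of an isometry and a projection, and combining the two displayed intertwining identities yields
$$
\iota S_i^* \;=\; P_\cK T_i^* \iota_0 \;=\; R_i^* P_\cK \iota_0 \;=\; R_i^* \iota, \qquad \iota P^* \;=\; U^* \iota,
$$
exhibiting $\iota$ as a contractive module homomorphism from the adjoint module $(\cH,\mathsf S^*)$ into the unitary module $(\cK,\mathsf R)$ (equivalently, by the earlier remark that the adjoint of a unitary module is again a unitary module, the intertwining is with $\mathsf R^*$ on the same Hilbert space $\cK$).

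I do not expect a substantive obstacle here: once the Wold-type decomposition of \cite{BSR} is in hand and $\cH$ is correctly identified with $\cM\ominus\cM'$, the verification reduces to the one-line computation above. The minimality hypothesis on $(\cM,\mathsf T)$ plays no role in the construction of $\iota$ itself; its purpose is simply to make the unitary part $(\cK,\mathsf R)$ well-defined and to give content to the assumption that it is nontrivial. The only point requiring care is the side of the intertwining: the projection construction naturally pairs $\mathsf S^*$ with $\mathsf R^*$ rather than with $\mathsf R$, which is consistent with the statement of the lemma through the self-duality of unitary Hilbert modules.
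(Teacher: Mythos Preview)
Your proof is correct and follows exactly the paper's approach: the paper's entire proof consists of defining $\fJ:h\mapsto P_{\cK}h$, which is precisely your $\iota=P_{\cK}\iota_0$ after identifying $\cH$ with $\cM\ominus\cM'$. One small remark: your parenthetical about minimality is slightly off---the Wold-type decomposition (and hence the unitary part) is defined for any isometric module; minimality together with the nontriviality of $\cK$ is what actually guarantees $\iota\neq 0$, since otherwise $\cH\subseteq H^2(\cE)$ would be contained in a proper reducing submodule.
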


\begin{proof}
The proof consists of defining $ \fJ:\mathcal H \to \cK$ as
$$
\fJ:h\to P_{\cK}h,\quad (h\in\mathcal H)
$$
where $P_\cK$ denotes the orthogonal projection of $\cM$ onto $\cK$.   \end{proof}

Lemma \ref{non-canonical} has an interesting, but not a very unexpected, consequence whose proof is easy and hence omitted.
\begin{corollary}\label{P:Aux}
If a pure  contractive Hilbert module $(\cH, \mathsf S)$ can be realized as a quotient module of an isometric module $(\mathcal M, \mathsf T)$ which is minimal over $(\cH, \mathsf S)$, then the unitary part in the Biswas - Shyam Roy decomposition of $(\mathcal M, \mathsf T)$ is absent.
\end{corollary}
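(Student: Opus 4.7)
The plan is a short proof by contradiction, combining Lemma \ref{non-canonical} with Theorem \ref{Thm:Ext} applied not to $(\cH, \mathsf S)$ itself but to its adjoint module $(\cH, \mathsf S^*)$.

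First, I would assume for contradiction that the unitary part $(\cK, \mathsf R)$ in the Biswas--Shyam Roy decomposition of $(\mathcal M, \mathsf T)$ is non-trivial. Lemma \ref{non-canonical} then immediately produces a contractive module embedding of the adjoint module $(\cH, \mathsf S^*)$ into the unitary module carried by $\cK$. Since every unitary Hilbert module is trivially a submodule of itself and therefore qualifies as an isometric Hilbert module in the sense of this paper, the contractive Hilbert module $(\cH, \mathsf S^*)$ satisfies condition $(4)$ of Theorem \ref{Thm:Ext}.

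Next, I would unravel what the implication $(4)\Rightarrow(2)$ of Theorem \ref{Thm:Ext} gives when applied to the adjoint module: for $(\cH, \mathsf S^*)$ the product operator is $P^*$, so condition $(2)$ now reads
\begin{equation*}
\operatorname{SOT-}\lim_{n\to\infty}(P^*)^{*n}(P^*)^n \;=\; \operatorname{SOT-}\lim_{n\to\infty}P^n P^{*n} \;\neq\; 0.
\end{equation*}
The contradiction then comes from purity of $(\cH, \mathsf S)$: purity means $P^{*n}\to 0$ strongly on $\cH$, and since $\|P^n\|\leq 1$ the estimate $\|P^nP^{*n}h\|\leq \|P^{*n}h\|\to 0$ holds for every $h\in \cH$. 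Hence $P^nP^{*n}\to 0$ strongly, contradicting the display above, and so $\cK=\{0\}$.

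The only point of care is keeping track of what plays the role of ``$P$'' after switching to the adjoint module; no genuinely hard step is anticipated, as the entire argument is a short chase through the already-established equivalences of Theorem \ref{Thm:Ext} once the adjoint viewpoint of Lemma \ref{non-canonical} is adopted.
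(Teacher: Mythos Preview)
Your argument is correct and is precisely the route the paper has in mind: the corollary is stated as an immediate consequence of Lemma~\ref{non-canonical}, and combining that lemma with the implication $(4)\Rightarrow(2)$ of Theorem~\ref{Thm:Ext} for the adjoint module is the natural way to read it.

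One small point deserves to be made explicit. Condition~$(4)$ of Theorem~\ref{Thm:Ext} tacitly requires a \emph{nonzero} contractive module map (the proof of $(4)\Rightarrow(1)$ uses that $\fJ^*\fJ\neq 0$), while the paper's definition of ``contractively embedded'' literally allows the zero map. So you should note why the map $\fJ:h\mapsto P_{\cK}h$ produced by Lemma~\ref{non-canonical} is nonzero. This is exactly where the \emph{minimality} hypothesis on $(\cM,\mathsf T)$ enters: if $P_{\cK}|_{\cH}=0$ then $\cH\subseteq H^2(\cE)$, and since $H^2(\cE)$ is a proper $\mathsf T$-reducing submodule of $\cM$ (proper because $\cK\neq\{0\}$), this contradicts minimality. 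With this one sentence added, your proof is complete.
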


\section{Algebraic structure of the Toeplitz $C^*$-algebra}

\subsection{Proof of Theorem \ref{Thm:ComLftPDisc}.} We begin with a few lemmas. The first lemma below gives us the existence of an important completely positive map. This is a particular case of Lemma 2.1 in \cite{PrunaruJFA}. The central idea of the proof goes back to Arveson, see Proposition 5.2 in \cite{Arveson-Nest}. For a subnormal operator tuple, in the multivariable situation, Eschmeier and Everard have proven a similar result by direct construction, see Section 3 of \cite{EE}.

\begin{lemma} \label{L:PJFA}
Let $P$ be a contraction on the Hilbert space $\mathcal H$. Then there exists a completely positive, completely contractive, idempotent linear map
$\Phi : \mathcal B(\mathcal H) \to \mathcal B(\mathcal H)$ such that $\operatorname{Ran}  \Phi = \mathcal T(P)$.
Moreover, if $ A,B \in \mathcal B(\mathcal H)$ satisfy $P^*(AXB) P = A P^*XPB$ for all $X \in \mathcal B(\mathcal H)$ then $\Phi(AXB) = A\Phi(X)B$. In addition,
$$\Phi(I_\mathcal H ) = Q = \lim_{n\to \infty} P^{*n}P^n$$
where the limit is in the strong operator topology.
\end{lemma}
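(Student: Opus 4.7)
The plan is to construct $\Phi$ as a Banach-limit average of the iterates of the unital, completely positive, completely contractive map $\tau : \mathcal B(\mathcal H) \to \mathcal B(\mathcal H)$ defined by $\tau(X) = P^*XP$. Fix a Banach limit $\mathrm{LIM}$ on $\ell^\infty(\mathbb N)$. For each $X \in \mathcal B(\mathcal H)$, the sesquilinear form
$$(h,k) \;\longmapsto\; \mathrm{LIM}_n \langle P^{*n} X P^n h, k\rangle$$
is bounded by $\|X\|\,\|h\|\,\|k\|$, so by the Riesz representation theorem it is represented by a unique bounded operator $\Phi(X)$ with $\|\Phi(X)\| \leq \|X\|$. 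Linearity of $\Phi$ is inherited from linearity of $\mathrm{LIM}$.

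Next I would verify the structural properties. For each fixed $n$, the map $X \mapsto \tau^n(X) = P^{*n} X P^n$ is completely positive and completely contractive; the same Banach-limit recipe, applied to the contraction $P \otimes I_m$ on $\mathcal H \otimes \mathbb C^m$, describes the amplification $\Phi \otimes \mathrm{id}_{M_m}$, which is therefore a pointwise weak-operator limit (in the Banach-limit sense) of completely positive contractions, hence itself completely positive and contractive. The fact that $\Phi(X) \in \mathcal T(P)$ for every $X$ uses the translation invariance of the Banach limit:
$$\langle P^* \Phi(X) P\, h, k\rangle = \mathrm{LIM}_n \langle P^{*(n+1)} X P^{n+1} h, k\rangle = \mathrm{LIM}_n \langle P^{*n} X P^n h, k\rangle = \langle \Phi(X) h, k\rangle.$$
Conversely, if $X$ already lies in $\mathcal T(P)$, then $P^{*n} X P^n = X$ for all $n$, so the Banach limit returns $X$ and $\Phi(X)=X$. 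Combining these two observations yields simultaneously $\operatorname{Ran}\Phi = \mathcal T(P)$ and $\Phi^2 = \Phi$.

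For the module property, the hypothesis $P^*(AXB)P = A P^*XP B$ is precisely $\tau(AXB) = A\tau(X)B$. Iterating gives $\tau^n(AXB) = A \tau^n(X) B$ for every $n \geq 0$, and applying $\mathrm{LIM}_n$ after pairing with $h,k$ (and sliding $A$ out to the right via its adjoint) yields $\Phi(AXB) = A\Phi(X)B$. Finally, since $P$ is a contraction, the sequence $(P^{*n}P^n)$ is monotonically decreasing and positive, hence strongly convergent to some $Q \geq 0$; and because Banach limits agree with the ordinary limit on convergent bounded sequences,
$$\langle \Phi(I_{\mathcal H}) h, k\rangle = \lim_n \langle P^{*n}P^n h, k\rangle = \langle Q h, k\rangle,$$
so $\Phi(I_{\mathcal H}) = Q$.

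The one potentially delicate point is ensuring that complete positivity and complete contractivity survive the Banach-limit passage at every matrix level; this is handled by the matrix-amplification observation in the second paragraph, which reduces the issue to the scalar case. Everything else is a routine unpacking of the invariance and idempotence of Banach limits.
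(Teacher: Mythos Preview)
Your proposal is correct and follows exactly the classical Arveson--Prunaru construction that the paper invokes: the paper does not supply its own argument for this lemma but simply cites Lemma~2.1 of \cite{PrunaruJFA} and Proposition~5.2 of \cite{Arveson-Nest}, both of which build $\Phi$ as a Banach-limit average of the iterates $X\mapsto P^{*n}XP^n$, precisely as you do. Your handling of complete positivity via the amplification $P\otimes I_m$, of the range and idempotence via translation invariance of $\mathrm{LIM}$, of the module identity $\Phi(AXB)=A\Phi(X)B$ via iteration of $\tau$, and of $\Phi(I_{\mathcal H})=Q$ via monotone convergence are all standard and sound.
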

Part of Theorem 3.1 of \cite{CE} gives us more properties of $\Phi$.

\begin{lemma}[Choi and Effros] \label{L:CE}
Let $\Phi :\mathcal B(\mathcal H) \to \mathcal B(\mathcal H)$ be a completely positive and completely contractive map
such that $\Phi\circ\Phi=\Phi$. Then for all $X$ and  $Y$ in $\mathcal B(\mathcal H)$ we have
\begin{align}\label{Identities}
\Phi(\Phi(X)Y)= \Phi ( X\Phi(Y)) = \Phi( \Phi(X) \Phi(Y)).
\end{align}
\end{lemma}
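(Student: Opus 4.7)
The three identities in \eqref{Identities} say that $\Phi$ is conditional-expectation-like onto its range, and I would follow the classical Choi--Effros strategy: show that $\operatorname{Ran}(\Phi)$ sits inside the \emph{multiplicative domain} of $\Phi$ and then invoke the bimodule property of that domain.

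First, I would use Stinespring's theorem to write $\Phi(X) = V^*\pi(X)V$ with $V: \cH \to \cK$ a contraction and $\pi: \cB(\cH) \to \cB(\cK)$ a $*$-representation; this is available because $\Phi$ is CP and completely contractive. The inequality $VV^* \leq I$ gives the Kadison--Schwarz bound $\Phi(X^*X) \geq \Phi(X)^*\Phi(X)$ directly. For any $A \in \operatorname{Ran}(\Phi)$, where $A = \Phi(A)$, this becomes $\Phi(A^*A) - A^*A \geq 0$, a non-negative element that idempotency $\Phi \circ \Phi = \Phi$ forces into $\ker \Phi$. Analysing when Kadison--Schwarz is saturated in the Stinespring picture, i.e.\ when the defect $V^*\pi(A)^*(I - VV^*)\pi(A)V$ is annihilated by $\Phi$, places every $A \in \operatorname{Ran}(\Phi)$ inside the multiplicative domain of $\Phi$.

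Once $\operatorname{Ran}(\Phi)$ is contained in the multiplicative domain, the standard bimodule identity gives $\Phi(AY) = \Phi(A)\Phi(Y)$ and $\Phi(YA) = \Phi(Y)\Phi(A)$ for every $A \in \operatorname{Ran}(\Phi)$ and every $Y \in \cB(\cH)$. Setting $A = \Phi(X)$ in the first and using $\Phi \circ \Phi = \Phi$ gives $\Phi(\Phi(X)Y) = \Phi(X)\Phi(Y)$; setting $A = \Phi(Y)$ in the second gives $\Phi(X\Phi(Y)) = \Phi(X)\Phi(Y)$; and combining the two gives $\Phi(\Phi(X)\Phi(Y)) = \Phi(X)\Phi(Y)$. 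All three quantities therefore coincide with $\Phi(X)\Phi(Y)$, which is exactly \eqref{Identities}.

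The main technical obstacle is the non-unitality of $\Phi$, which blocks the cleanest form of Kadison--Schwarz and its multiplicative-domain corollary. The Stinespring dilation with $V$ merely contractive (rather than an isometry) supplies the needed flexibility, but one has to carefully separate the defect $I - V^*V$ arising from $\Phi(I) \neq I$ from the saturation defect that actually forces elements of $\operatorname{Ran}(\Phi)$ into the multiplicative domain.
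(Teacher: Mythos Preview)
The paper does not supply a proof of this lemma; it simply attributes the result to Theorem~3.1 of Choi--Effros \cite{CE}. So there is no in-paper argument to compare against, only the original source.

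Your outline contains a genuine gap. You set out to show that $\operatorname{Ran}(\Phi)$ lies inside the multiplicative domain of $\Phi$, and from this you deduce that all three quantities in \eqref{Identities} equal $\Phi(X)\Phi(Y)$. But that stronger conclusion is equivalent to $\operatorname{Ran}(\Phi)$ being closed under the ambient product of $\cB(\cH)$, and this is false in general: the very map $\Phi$ of Lemma~\ref{L:PJFA} with $P$ the unilateral shift has $\operatorname{Ran}(\Phi)=\cT(P)$ equal to the classical Toeplitz operators on $H^2$, which famously do \emph{not} form an algebra. Correspondingly, your argument for the inclusion cannot close: from $\Phi(A^*A)-A^*A\ge 0$ and $\Phi\bigl(\Phi(A^*A)-A^*A\bigr)=0$ you cannot infer $\Phi(A^*A)=A^*A$. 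A positive element annihilated by $\Phi$ need not vanish, and ``defect annihilated by $\Phi$'' is strictly weaker than ``defect equals zero'', which is what membership in the multiplicative domain actually requires.

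The Choi--Effros argument proves the correct, weaker statement directly: for $A\in\operatorname{Ran}(\Phi)$ and $Z\in\ker(\Phi)$ one shows $\Phi(AZ)=\Phi(ZA)=0$. Apply the Schwarz inequality to $A+tZ$ (so that $\Phi(A+tZ)=A$), then apply $\Phi$ once more and use idempotency to obtain $t^{2}\Phi(Z^*Z)+t\bigl(\Phi(A^*Z)+\Phi(Z^*A)\bigr)\ge 0$ for all real $t$; letting $t\to 0^{\pm}$ and then replacing $Z$ by $iZ$ forces $\Phi(A^*Z)=0$. Writing $Y=\Phi(Y)+(Y-\Phi(Y))$ now gives $\Phi(\Phi(X)Y)=\Phi(\Phi(X)\Phi(Y))$, and symmetrically the second identity, without any claim that $\operatorname{Ran}(\Phi)$ is multiplicatively closed.
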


The last lemma that we need will play a crucial role. Its proof follows from Theorem 3.1 in \cite{PrunaruJFA}. For someone not familiar with \cite{PrunaruJFA}, this might create some opacity and hence the simplified proof in our context is supplied below.

\begin{lemma}\label{L:PJFA2}
  There is an idempotent, completely positive and completely contractive map $\Phi:\mathcal B(\mathcal H)\to \mathcal B(\mathcal H)$ such that
\begin{align}\label{P-Toep}
\operatorname{Ran}\Phi=\{X\in \mathcal B(\mathcal H):P^*XP=X\}=\mathcal T(P).
\end{align}
If $(\mathcal K, \pi, \fJ)$ is the minimal Stinespring dilation of the restriction of $\Phi$ to the unital $C^*$-algebra $C^*(I_{\mathcal H}, \mathcal T (P))$ generated by $\mathcal T (P)$, and if $Q =$ SOT-$\lim P^{*n}P^n$, then the following properties are satisfied.

\noindent (${\bf{ P_1}}$) $U:=\pi(QP)$ is a unitary operator. Moreover, $\fJ P=U\fJ $ and $\mathcal K$ is the smallest reducing subspace for $U$ containing $\fJ \mathcal H$.

\noindent (${\bf{ P_2}}$) The map $\rho:\{U\}' \to \mathcal T(P)$ defined by $\rho (Y)=\fJ ^*Y\fJ $, for all $Y\in \{U\}'$, is surjective and a complete isometry.

\noindent (${\bf{ P_3}}$) The Stinespring triple $(\cK,\pi,\fJ )$ satisfies $\pi\circ\rho=I$. In particular,
$$\pi(C^*(I_\mathcal H,\mathcal T(P)))=\{U\}'.$$

\noindent (${\bf{ P_4}}$) The linear map $\Theta : \{P\}' \to  \{U\}'$
defined by $\Theta(X) = \pi(QX)$ is completely contractive, unital and multiplicative.
\end{lemma}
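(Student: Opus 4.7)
\emph{Plan.} The existence of $\Phi$ with $\operatorname{Ran}\Phi = \mathcal T(P)$ and $\Phi(I_\mathcal H) = Q$ is supplied by Lemma \ref{L:PJFA}. My first step is to extract from the same lemma two one-sided module identities: the pairs $(A,B) = (I_\mathcal H, P)$ and $(P^*, I_\mathcal H)$ each trivially satisfy $P^*(AXB)P = AP^*XPB$ for every $X$, so Lemma \ref{L:PJFA} yields
\begin{equation*}
\Phi(XP) = \Phi(X)\,P \quad \text{and} \quad \Phi(P^*X) = P^*\Phi(X), \qquad X \in \mathcal B(\mathcal H).
\end{equation*}
In particular $\Phi(QP) = QP$, placing $QP$ in $\mathcal T(P) \subset \mathcal A := C^*(I_\mathcal H, \mathcal T(P))$. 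Next, I apply Stinespring's theorem to the unital completely positive map $\Phi|_\mathcal A$ to obtain a unital $*$-representation $\pi : \mathcal A \to \mathcal B(\mathcal K)$ and a contraction $\fJ: \mathcal H \to \mathcal K$ satisfying $\Phi|_\mathcal A(X) = \fJ^*\pi(X)\fJ$ with minimality $\mathcal K = \overline{\operatorname{span}}\,\pi(\mathcal A)\fJ\mathcal H$; unitality of $\pi$ forces $\fJ^*\fJ = \Phi(I) = Q$.

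The crux of the proof is the identity $\pi(Q) = I_\mathcal K$, which is equivalent to the ``ergodic'' relation
\begin{equation*}
\Phi(Y_1 Q Y_2) = \Phi(Y_1 Y_2) \qquad (Y_1, Y_2 \in \mathcal A).
\end{equation*}
I would verify this by approximating $Q$ by the decreasing tail $P^{*n}P^n$, using the module identities to absorb the powers of $P$ and $P^*$ into $Y_1, Y_2$, and passing to a Banach-limit evaluation (the strategy of \cite{PrunaruJFA, Arveson-Nest}). Granted this identity, $(\mathbf{P_1})$ follows: for the intertwining $\fJ P = U\fJ$ with $U := \pi(QP)$, testing against the dense span $\pi(\mathcal A)\fJ\mathcal H$ reduces to $\Phi(X^*QP) = \Phi(X^*Q)P = \Phi(X^*)P$, which combines $\Phi(YP) = \Phi(Y)P$ with the ergodic relation. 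Isometricity of $U$ on $\fJ\mathcal H$ then comes from $\|U\fJ h\|^2 = \langle P^*\Phi(Q^2)Ph, h\rangle = \langle P^*QPh, h\rangle = \|\fJ Ph\|^2$ (once more via the ergodic relation, applied with $Y_1 = Q$, $Y_2 = I$). The intertwining spreads isometricity to all of $\mathcal K$, and minimality forces $U$ to be surjective and makes $\mathcal K$ the smallest reducing subspace of $U$ containing $\fJ \mathcal H$.

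For $(\mathbf{P_2})$, the intertwining gives $P^*\rho(Y)P = \fJ^*U^*YU\fJ = \fJ^*Y\fJ = \rho(Y)$ for $Y \in \{U\}'$, so $\rho(Y) \in \mathcal T(P)$. Conversely, for $X \in \mathcal T(P)$ the operator $\pi(X)$ belongs to $\{U\}'$ because $\pi(X)U - U\pi(X) = \pi(XQP - QPX)$ is killed under $\pi(Q) = I_\mathcal K$, while $\fJ^*\pi(X)\fJ = \Phi(X) = X$ exhibits $\pi|_{\mathcal T(P)}$ as a right inverse of $\rho$; complete isometry and the equality $\pi(\mathcal A) = \{U\}'$ of $(\mathbf{P_3})$ then follow from the minimality of the Stinespring dilation. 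For $(\mathbf{P_4})$: if $X \in \{P\}'$ then $P^*(QX)P = P^*Q \cdot XP = P^*Q \cdot PX = (P^*QP)X = QX$, so $QX \in \mathcal T(P) \subset \mathcal A$ and $\Theta(X) := \pi(QX)$ is well-defined in $\{U\}'$. Unitality $\Theta(I) = \pi(Q) = I_\mathcal K$ is the crux identity itself, and multiplicativity $\pi(QXY) = \pi(QXQY) = \pi(QX)\pi(QY)$ follows from $\pi(Q) = I_\mathcal K$ together with the $*$-homomorphism property of $\pi$, while complete contractivity is inherited from $\pi$. The single obstacle that the whole argument pivots on is therefore the verification of $\pi(Q) = I_\mathcal K$; once this is in hand, $(\mathbf{P_1})$--$(\mathbf{P_4})$ unfold by the computations sketched above.
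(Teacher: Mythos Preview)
Your identification of $\pi(Q)=I_{\mathcal K}$ as the pivotal fact is correct, and it is indeed equivalent to the ergodic relation $\Phi(Y_1QY_2)=\Phi(Y_1Y_2)$ for $Y_1,Y_2\in\mathcal A:=C^*(I_\mathcal H,\mathcal T(P))$. However, $\pi(Q)=I$ by itself is \emph{not} strong enough to carry the deductions you sketch for $(\mathbf{P_1})$--$(\mathbf{P_3})$. The paper's route is different and supplies the missing leverage: it uses Lemma~\ref{L:CE} (Choi--Effros) to show that $\operatorname{Ker}\Phi|_{\mathcal A}$ is a two-sided ideal of $\mathcal A$, whence (by minimality of Stinespring) $\operatorname{Ker}\Phi|_{\mathcal A}=\operatorname{Ker}\pi$, i.e.\ $\pi=\pi\circ\Phi$ on $\mathcal A$. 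This is strictly stronger than $\pi(Q)=I$ (the latter only says $I-Q\in\operatorname{Ker}\pi$), and it is what actually drives the argument: from $\pi=\pi\circ\Phi$ one computes $U^*\pi(X)U=\pi(P^*QXQP)=\pi(\Phi(P^*QXQP))=\pi(P^*\Phi(X)P)=\pi(\Phi(X))=\pi(X)$ for \emph{every} $X\in\mathcal A$, which simultaneously gives that $U$ is an isometry (take $X=I$) and that $\pi(\mathcal A)\subset\{U\}'$ once $U$ is shown to be unitary.

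Two concrete places where your sketch breaks down without $\pi=\pi\circ\Phi$: (i) ``the intertwining spreads isometricity to all of $\mathcal K$'' is not valid---knowing $U\fJ=\fJ P$ and that $U$ is isometric on $\fJ\mathcal H$ does not make $U$ isometric on $\overline{\pi(\mathcal A)\fJ\mathcal H}$, because you have not yet shown $U$ commutes with $\pi(\mathcal A)$; (ii) in $(\mathbf{P_2})$, the claim that $\pi(X\cdot QP-QP\cdot X)$ ``is killed under $\pi(Q)=I$'' is unsubstantiated: $\pi(Q)=I$ lets you insert or delete a factor $Q$ \emph{between two elements of $\mathcal A$}, but since $P\notin\mathcal A$ in general there is no way to reduce $X(QP)-(QP)X$ to zero by such moves. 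Both obstructions disappear once one has $\pi=\pi\circ\Phi$. Finally, your one-line appeal to ``minimality'' for $(\mathbf{P_3})$ hides a substantial argument: the paper proves $\pi(\mathcal A)=\{U\}'$ and the complete isometry of $\rho$ via an injective compression map and Kadison's theorem on unital complete isometries between $C^*$-algebras; this does not fall out of Stinespring minimality alone. The fix is simple in principle: replace your Banach-limit attack on the ergodic relation by the Choi--Effros argument that $\operatorname{Ker}\Phi|_{\mathcal A}$ is an ideal (if $\Phi(Z)=0$ and $Y\in\operatorname{Ran}\Phi$, then $\Phi(YZ)=\Phi(\Phi(Y)Z)=\Phi(Y\Phi(Z))=0$, and iterate over products), and then run the paper's computations.
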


\begin{proof}
We restrict $\Phi$ to $\mathcal C^*(I_\mathcal H, \mathcal T(P))$ and continue to call it $\Phi$.

   By definition of minimal Stinepring dilation,
\begin{equation}
\label{Stines}
\Phi(X) = \fJ^*\pi (X)\fJ \text{ for every $X\in \mathcal C^*(I_\mathcal H, \mathcal T(P))$}.
\end{equation}
Note that $Q=\Phi(I_\mathcal H)=\fJ^*\fJ=\operatorname{SOT-}\lim_{n\rightarrow \infty} P^{*n}P^n.$

 The kernel of $\Phi$ is an ideal in $C^*(I_\mathcal H, \mathcal T(P))$ by Lemma~\ref{L:CE} (when $\Phi$ is allowed as a map on whole of $\mathcal B (\mathcal H)$, its kernel may not be an ideal) and hence it follows from the construction of the minimal Stinespring dilation that
$\text{Ker }\!\Phi= \text{Ker }\!\pi$. Thus
\begin{equation} \label{pix=piphix} \pi(X)=\pi(\Phi(X)) \text{ for any } X\in C^*(I,\mathcal T(P)). \end{equation}
Since $\pi$ is a representation,we get $ U^*\pi(X)U = \pi(X) \text{ for any } X \in C^*(I,\mathcal T(P)).$

Since $\pi$ is unital, we get that $U$ is an isometry. If $P'$ is a projection in the weak* closure of $\pi(C^*(I,\mathcal T(P)))$, then we also have $U^*P'U=P'$ and $U^* P'^{\perp}U=P'^{\perp}$. This shows that $UP'=P'U$ and therefore $\pi(X)U=U\pi(X)$ for all $X\in C^*(I,\mathcal T(P))$. In particular, it follows that $U$ is a unitary and
$ \pi(C^*(I_\mathcal H,\mathcal T(P)))\subseteq\{U\}'. $

We now prove an identity which will be used in this proof as well as later. The identity is

\begin{equation} \label{thetaXVisVX} \pi(QX)\fJ =\fJ X \end{equation}
for any $X \in \mathcal B(\mathcal H)$ that commutes with $P$.
The proof of \eqref{thetaXVisVX} follows from two computations. For every $h, h^\prime \in \mathcal H$, we have
\begin{align*}
\langle \pi(QX)\fJ h,\fJ h^\prime \rangle&=\langle \fJ ^*\pi(QX)\fJ h,k\rangle\\
&=\langle \Phi(QX)h,h^\prime \rangle  
\\
&=\langle QXh,h^\prime \rangle \quad [\text{because } \mathcal T (P) \text{ is fixed by } \Phi]\\& =\langle \fJ Xh,\fJ h^\prime \rangle \end{align*}
showing that $P_{\overline{\operatorname{Ran}} \fJ }\pi(QX)\fJ  = \fJ X$. On the other hand,
\begin{align*}
\|\pi(QX)\fJ h\|^2 &=\langle \fJ ^*\pi(X^*Q^2X)\fJ h,h \rangle \\
&=\langle \Phi(X^* Q^2X)h,h \rangle \\
&=\langle X^* \Phi(Q^2)Xh,h \rangle \quad [\text{by Lemma \ref{L:PJFA}}]\\
&=\langle X^*QXh,h\rangle \quad[\text{by Lemma \ref{L:CE}}]\\
&=\| \fJ Xh \|^2.
\end{align*}
Hence, \eqref{thetaXVisVX}  is proved.

A trivial consequence of \eqref{thetaXVisVX} is that
 $U\fJ=\fJ P$. To complete the proof of ${\bf{ P_1}}$, it is required to establish that $\mathcal K$ is the smallest reducing subspace for $U$ containing $\fJ \mathcal H$. To that end, we consider a map $\delta$ from $\operatorname{Ran} \pi$ into $\mathcal T (P)$ given by $$\delta(\pi(X))=\fJ^*\pi(X)\fJ=\Phi(X) \text{ for all } X\in C^*(I, \mathcal T(P)).$$
It is injective because $\text{Ker }\!\Phi= \text{Ker }\!\pi$.

Since $\delta\circ \pi=\Phi$, we have $\delta\circ \pi$ to be idempotent and this coupled with the injectivity of $\delta$ gives us $\pi\circ\delta=I$ on $\pi\{C^*(I,\mathcal T(P))\}$. This immediately implies that $\delta$ is a complete isometry.

Let $\mathcal K_0\subseteq \mathcal K$ be the smallest reducing subspace for $U$ containing $\fJ \mathcal H$. Let $P_{\mathcal K_0}$ be the projection in $\mathcal B(\mathcal K)$ onto the space $\mathcal K_0$. Consider the vector space \[P_{\mathcal K_0}\{U\}'P_{\mathcal K_0}:=\{P_{\mathcal K_0}XP_{\mathcal K_0}: X\in\{U\}'\}= \{P_{\mathcal K_0}X|_{\mathcal K_0}\oplus 0_{\mathcal K_0^{\perp}}: X\in\{U\}'\}.  \]
and the map $\delta': P_{\mathcal K_0}\{U\}'P_{\mathcal K_0}\to \mathcal T(P)\subseteq \mathcal B(\mathcal H)$ defined by $X\mapsto \fJ^* X\fJ$. This is injective.

Indeed, it is easy to check that $\fJ^* X\fJ\in \mathcal T(P)$ for $X\in \{U\}'$. Now if $\fJ^*X\fJ=0$ for some $X\in \{U\}'$ then using the identity $\fJ P=U\fJ $, we get that
$$\langle X f(U, U^*)\fJ h, g(U,U^*)\fJ k\rangle =0$$
for any two variable polynomials $f$ and $g$ and $h,k\in\mathcal H$. This shows that $P_{\mathcal K_0}XP_{\mathcal K_0}=0$ and therefore, $\delta'$ is injective. For any $Y\in P_{\mathcal K_0}\{U\}'P_{\mathcal K_0}$,
\begin{align*}
 \delta'(P_{\mathcal K_0}\pi(\fJ^*Y\fJ)P_{\mathcal K_0}-Y)=\fJ^*\pi(\fJ^*Y\fJ)\fJ-\fJ^*Y\fJ=  \Phi(\fJ^*Y\fJ)-\fJ^*Y\fJ=0.
\end{align*}
Thus, by the injectivity of $\delta'$, we have $ P_{\mathcal K_0}\pi(C^*(I,\mathcal T(P)))P_{\mathcal K_0}=P_{\mathcal K_0}\{U\}'P_{\mathcal K_0}$.
In other words, we have a surjective complete contraction
\[ \tilde{C}_{\mathcal K_0}: \pi(C^*(I,\mathcal T(P))) \to P_{\mathcal K_0}\{U\}'P_{\mathcal K_0}= \{P_{\mathcal K_0}X|_{\mathcal K_0}\oplus 0_{\mathcal K_0^{\perp}}: X\in\{U\}'\},\]
defined by $X\mapsto P_{\mathcal K_0}X P_{\mathcal K_0}$. Since $\delta=\delta'\circ \tilde{C}_{\mathcal K_0}$ and $\delta$ is a complete isometry,
$\tilde{C}_{\mathcal K_0}$ is a complete isometry. Then the induced compression map
\[C_{\mathcal K_0}: \pi(C^*(I,\mathcal T(P))) \to \{P_{\mathcal K_0} U|_{\mathcal K_0}\}'\subseteq\mathcal B(\mathcal K_0),\quad  X\mapsto P_{\mathcal K_0}X|_{\mathcal K_0} \]
is a unital complete isometry and therefore a $C^*$-isomorphism by a result of Kadison (\cite{Kadison}). Hence by the minimality of the Stinespring representation $\pi$ we have
$\mathcal K=\mathcal K_0$ and therefore $\pi(C^*(I,\mathcal T(P))) =\{U\}'$. This completes the proofs of ${\bf{ P_1}}$, ${\bf{ P_2}}$ and ${\bf{ P_3}}$.

To prove ${\bf{ P_4}}$, first note that $\Theta$ is completely contractive and unital as $\pi(Q)=I$. We have also proved that
$\Theta (X)\fJ=\fJ X$ for all $X\in\{P\}'$. Since, for $X,Y\in \{P\}'$,
\[ \delta(\Theta(XY)-\Theta(X)\Theta(Y))=\fJ^*\fJ XY-\fJ^*\Theta(X)\Theta(Y)\fJ=0,\]
then by injectivity of $\delta$, we have $\Theta$ to be multiplicative. Hence ${\bf{ P_4}}$ is proved. \end{proof}

With these properties of the Stinespring dilation of $\Phi$ at hand, we start the proof of the theorem. Define
\begin{align*}
    R_i := \pi (QS_i) \mbox{ for } 1\leq i \leq d-1 \quad\text{and}\quad U=\pi(QP).
\end{align*}
We first show that $(\mathcal K , \mathsf R)$ is a unitary module. To that end, we shall use Theorem 4.2 of \cite{BSR}. Let $\gamma_i = (d - i)/d$ for $i=1,2, \ldots , d-1$. Since $\Theta$ in $({\bf{ P_4}})$ is multiplicative, the tuple $(R_1,\dots,R_{d-1},U)$ is commuting. Note that $\pi\circ\Phi(X)=\pi(X)$, for every $X\in\mathcal C^*(I_\mathcal H, \mathcal T(P))$,
which follows from the facts that $\rho\circ\pi(X)=\Phi(X)$ for all $X\in\mathcal C^*(I_\mathcal H, \mathcal T(P))$ and $\pi\circ \rho= I$.

Also note that for each $1\leq i \leq d-1$,
$$R_i^*U=\pi(S_i^*Q^2P)=\pi(\Phi({S_i}^*Q^2P))=\pi({S_i}^*\Phi(Q^2)P)=\pi(S_i^*QP)=\pi(QS_{d-i})=R_{d-i}.$$
It remains to show that the tuple $(\gamma_1R_1,\dots,\gamma_{d-1}R_{d-1})$ is a $\Gamma_{d-1}$-contraction. Since $(S_1,\dots,S_{d-1},P)$ is a $\Gamma_d$-contraction,
by Lemma 2.7 in \cite{BSR}, $(\gamma_1S_1,\dots,\gamma_{d-1}S_{d-1})$ is a $\Gamma_{d-1}$-contraction. Since $\Theta$ in $({\bf{ P_4}})$ is multiplicative, $(\gamma_1R_1,\dots,\gamma_{d-1}R_{d-1})$ is also a $\Gamma_{d-1}$-contraction.
It follows that the tuple $(R_1,\dots,R_{d-1},U)$ is a $\Gamma_d$-unitary.

That $(\mathcal H , \mathsf S)$ is canonically embedded as a submodule of the unitary module $(\mathcal K , \mathsf R)$ by $\fJ$ follows from the operator identity
 $\pi(QX)\fJ =\fJ X$ for every $X\in \{P\}'$ which was proved in the paragraphs following \eqref{thetaXVisVX}.

Minimality follows from $({\bf{P_1}})$, which says that $\cK$ is actually equal to
\begin{align*}
\overline{\operatorname{span}}\{U^m\fJ h:h\in\cH \text{ and }m\in \mathbb Z\}.
\end{align*}
Let $\rho$ be as in $(\bf P_2)$ above. Consider the restriction of $\rho$ to $\{R_1,\dots,R_{d-1},U\}'$ and continue to denote it by $\rho$. Since complete isometry is a hereditary property, to prove part (1),
 all we have to show is that $\rho(Y)$ lands in $\cT(\mathsf S)$, whenever $Y$ is in $\{R_1,\dots,R_{d-1},U\}'$ and $\rho$ is surjective.
 To that end, let $Y \in \{R_1,\dots,R_{d-1},U\}'$.  Then for each $i=1,2,\dots,d-1$, we see that
\begin{eqnarray*}
S_i^*\rho(Y)P&=&S_i^*\fJ^*Y\fJ P=\fJ^*R_i^*YU\fJ=\fJ^*R_{d-i}Y\fJ\\
&=&\fJ^*YR_{d-i}\fJ=\fJ^*Y\fJ S_{d-i}=\rho(Y)S_{d-i}.
\end{eqnarray*}
Thus $\rho$ maps $\{R_1,\dots,R_{d-1},U\}'$ into $\mathcal T(\mathsf S)$. For proving surjectivity of $\rho$, pick $X\in \mathcal T (\mathsf S)$. Then by $({\bf{ P_2}})$ above there exists a $Y$ in $\{U\}'$ such that $\rho(Y)=\fJ^*Y\fJ=X$.
We have to show that $Y$ commutes with each $R_i$. Since $X$ is in $\mathcal T (\mathsf S)$, we have $S_i^*\fJ^*Y\fJ P=\fJ^*Y\fJ S_{d-i}$.
Therefore by the intertwining property of $\fJ$ we have $\fJ^*R_i^*YU\fJ=\fJ^*YR_{d-i}\fJ$, which is the same as $\fJ ^*R_{d-i}^*Y\fJ=\fJ^*YR_{d-i}\fJ$.
This implies that for each $i=1,2,\dots, d-1$, $$\rho(YR_{d-i}-R_{d-i}Y)=\fJ^*(YR_{d-i}-R_{d-i}Y)\fJ=0.$$
This establishes the commutativity of $Y$ with each $R_i$, since $\rho$ is an isometry. This completes the proof of part (1).

Part (2) of the theorem follows from the content of $({\bf{P_3}})$ if we restrict $\pi$ to ${C^*(I,\mathcal T(\mathsf S))}$ and continue to call it $\pi$.

For the last part of theorem, let us take $\Theta$ as in $({\bf{P_4}})$, i,e.,
$$\Theta(X)=\pi(QX)$$ for every $X$ in $\{P\}'$. Restrict $\Theta$ to ${\{S_1,\dots,S_{d-1},P\}'}$ and continue to call it $\Theta$. The aim is to show that $\Theta(X)\in \{R_1, \dots ,R_{d-1},U\}'$ if $X\in \{S_1,\dots,S_{d-1},P\}'$.
For this we first observe that if $X$ commutes with each $S_j$, then $QX$ is in $\cT(\mathsf S)$.
Now the rest of the proof follows from part (2) of the theorem and \eqref{thetaXVisVX}.
\qed

\subsection{Proof of Theorem \ref{Thm:Gamma_n isometry}.}

 Let $Q$, $\fJ$, $\pi$, $\rho$ and $\mathsf R= (R_1\dots,R_{d-1}, U)$ be as in Theorem \ref{Thm:ComLftPDisc}. We first note that $\fJ$ is an isometry because  $\fJ^*\fJ = Q=\text{SOT}-\lim_j P^{* j}P^j=I$. We shall identify the module $(\mathcal H , \mathsf S)$ with the submodule $(\fJ\mathcal H , \fJ\mathsf S \fJ^*$) of $(\mathcal K, \mathsf R)$. Thus, we get from part $(1)$ of Theorem \ref{Thm:ComLftPDisc} that $(\mathcal K, \mathsf R)$ is a minimal unitary extension of $(\mathcal H , \mathsf S)$. Now let $X$ be an operator on $\mathcal H$ which commutes with $\mathsf S$. Set $Y:=\pi(X)$. Then by part $(3)$ of Theorem \ref{Thm:ComLftPDisc}, it follows that $Y$ commutes with $\mathsf R$ and $Y |_{\mathcal H} = X$, that is $Y$ is an extension of $X$.  Also since the map $\rho$, as in part (2) of Theorem \ref{Thm:ComLftPDisc} is an isometry, $Y$ is a unique norm preserving extension of $X$.
  Thus part (1) follows.

 Part (2) follows straightforward by setting $Y:=\pi(X)$ and remembering that $\pi$ is the minimal Stinespring dilation of the CP map $\Phi_0=\Phi|_{\mathcal C^*(I_{\mathcal H},\mathcal T (P))}: \mathcal C^*(I_{\mathcal H},\mathcal T (P))\to \mathcal B(\mathcal H)$ where $\Phi: \mathcal B(\mathcal H)\to \mathcal{B}(\mathcal H)$ is an idempotent CP map with range $\mathcal T (P)$.

 To prove part (3), set $\pi_0$ to be the restriction of $\pi$ to $\mathcal C^*(\mathsf S)$. The representation $\pi$
maps the generating set $\mathsf S$ of $\mathcal C^*(\mathsf S)$ to the generating set $\mathsf R$ of $\mathcal C^*(\mathsf R)$.
Since $\pi_0(\mathsf S)=\mathsf R$, the range of $\pi_0$ is $\mathcal C^*(\mathsf R)$.
Therefore to prove that the following sequence
$$
0\rightarrow\mathcal I(\mathsf S)\hookrightarrow \mathcal C^*(\mathsf S)\xrightarrow{\pi_0} \mathcal C^*(\mathsf R)\rightarrow 0
$$
is a short exact sequence, all we need to show is that ker$\pi_0=\mathcal I(\mathsf S)$.

Since $\pi_0(\mathcal C^*(\mathsf S))$ is abelian, we have $XY-YX$ in the kernel of $\pi_0$, for any $X,Y\in \mathcal C^*(\mathsf S)\cap \mathcal T(\mathsf S)$.
Hence $\mathcal I(\mathsf S)\subseteq$ ker$\pi_0$. To prove the other inclusion, let $Z_1$ be a finite product of members of $\mathsf S^*$ and $Z_2$ be a finite product of members of $\mathsf S$ and call $Z=Z_1Z_2$.
Since $Z\in\mathcal T (\mathsf S)\subseteq\mathcal T (P)$, we have $\Phi_0(Z)=Z$. Note that $\Phi_0(Z)=P_{\mathcal H}\pi_0(Z)|_{\mathcal H}$,
for every $Z\in \mathcal C^*(\mathsf S)$. Now let $Z$ be any arbitrary finite product of members from $\mathsf S$ and $\mathsf S^*$.
Since $\pi_0(\mathsf S)=\mathsf R$, which is a family of normal operators, we obtain, by the Fuglede-Putnam Theorem that,
action of $\Phi_0$ on $Z$ has all the members from $\mathsf S^*$ at the left and all the members from $\mathsf S$ at the right.
It follows from $\text{ker}\pi=\text{ker}\Phi_0$ and idempotence of $\Phi_0$ that ker$\pi_0=\{Z-\Phi_0(Z):Z\in \mathcal C^*(\mathsf S)\}$.

Because of the above action of $\Phi_0$, if $Z$ is a finite product of elements from $\mathsf S$ and $\mathsf S^*$ then a simple commutator manipulation shows that $Z-\Phi_0(Z)$ belongs to the ideal generated by all the commutators $XY-YX$, where $X,Y \in \mathcal C^*(\mathsf S)\cap \mathcal T(\mathsf S)$. This shows that ker$\pi_0=\mathcal I(\mathsf S)$.

In order to find a completely isometric cross section, set $\rho_0:=\rho|_{\pi(\mathcal C^*(\mathsf S))}$.
Then by the definition of $\rho$ and the action of $\Phi_0$, it follows that $\rho_0(\pi(X))=\fJ^*\pi(X)\fJ=\Phi_0(X)\in \mathcal C^*(\mathsf S)$ for all $X\in \mathcal C^*(\mathsf S)$.
Thus $\operatorname{\operatorname{Ran} } \rho_0\subseteq \mathcal C^*(\mathsf S)$ and therefore is a
completely isometric cross section.
This completes the proof of the theorem.

\section{An application}

In this section, we prove Theorem \ref{Thm:PLT}.
Let $Q$ be the limit as in \eqref{assymplimit}. Consider the isometric module $(\overline{\operatorname{Ran}}Q, \mathsf T)$ constructed in the proof of Theorem~\ref{Thm:Ext}, see equations  \eqref{X} and \eqref{Xj}. We shall obtain a bounded operator $\tilde X$ acting on $\overline{\operatorname{Ran}}Q$ with the following properties:

\begin{enumerate}
\item $\tilde X$ would commute with $\mathsf T=(T_1,\dots, T_{d-1}, V)$ and
\item $\| \tilde X \| \le \|X\|$.
\end{enumerate}
We shall then apply the commutant extension theorem established in part (1) of Theorem \ref{Thm:Gamma_n isometry}.

To that end, we first do a simple inner product computation. For every $h\in\cH$
\begin{align*}
\|Q^\frac{1}{2}Xh\|^2=\langle X^*QXh,h\rangle=\lim_n\langle P^{*n}X^*XP^nh,h \rangle\leq\|X\|^2\langle Qh,h\rangle.
\end{align*}
Thus there is a bounded operator $\tilde X:\overline{\operatorname{Ran }} Q\to\overline{\operatorname{Ran }} Q$ such that
$$
\tilde X:Q^\frac{1}{2}h\mapsto Q^\frac{1}{2}Xh.
$$with norm at most $\|X\|$. Let $j=1,2,\dots,d-1$ and let $T_j$ be the operators defined in \eqref{Xj}. Then for each $h\in\cH$, we have
\begin{align*}
\tilde X T_jQ^\frac{1}{2}h=\tilde X Q^\frac{1}{2}S_jh=Q^\frac{1}{2}XS_jh=Q^\frac{1}{2}S_jXh=T_jQ^\frac{1}{2}Xh=T_j\tilde XQ^\frac{1}{2}h
\end{align*}
showing that $\tilde X$ commutes with $T_j$ for all $j=1,\dots,d-1$. A similar computation also establishes that $\tilde X V= V\tilde X$. We are now ready to apply the technique of commutant extension mentioned above.

Consider $\mathsf R= (R_1,\dots,R_{d-1}, U)$ acting on $\cK$ as in \eqref{Int}. Recall that $(\cK, \mathsf R)$ is a unitary module and there is a contraction $\fJ:\cH\to\cK$ defined as $\fJ h=Q^\frac{1}{2}h$ such that \eqref{Int} holds. Now by the moreover part of item (1) in Theorem~\ref{Thm:Gamma_n isometry}, there exists an operator $Y$ in the commutant of $\mathsf R$ such that $Y|_{\overline{\operatorname{Ran}}Q}=\tilde X$ and $\|Y\|=\|\tilde X\|\leq \|X\|$. Finally, we note that for every $h\in\cH$,
\begin{align*}
\fJ Xh=Q^\frac{1}{2}Xh=\tilde XQ^\frac{1}{2}h=YQ^\frac{1}{2}h=Y\fJ h.
\end{align*}This completes the proof.

The following analogue of the intertwining lifting theorem is easily obtained as a corollary to Theorem \ref{Thm:PLT} by a $2 \times 2$ operator matrix trick.

\begin{corollary}
Let $(\cH, \mathsf S)$ and $(\cH', \mathsf S')$ be two contractive modules. Let $(\fJ,\cK,\mathsf R)$ and $(\fJ',\cK',\mathsf R')$ be as obtained by part (3) of Theorem \ref{Thm:Ext} corresponding to $(\cH, \mathsf S)$ and $(\cH', \mathsf S')$ respectively. Then corresponding to any operator $X:\cH\to\cH'$ intertwining $\mathsf S$ and $\mathsf S'$ there exists another operator $Y:\cK\to\cK'$ such that $Y$ intertwines $\mathsf R$ and $\mathsf R'$, $Y\fJ=\fJ' X$ and $\|Y\|\leq \|X\|$.
\end{corollary}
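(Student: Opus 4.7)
The plan is to encode the intertwining relation as a commutation on the direct sum $\cH\oplus\cH'$ and then invoke Theorem~\ref{Thm:PLT}.

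First, form the contractive Hilbert module $(\tilde\cH,\tilde{\mathsf S}):=(\cH\oplus\cH',\mathsf S\oplus \mathsf S')$; this is contractive because $\|f(\tilde{\mathsf S})\|=\max\{\|f(\mathsf S)\|,\|f(\mathsf S')\|\}\le \|f\|$ for every $f\in\cA$. Put
\[
\tilde X:=\sbm{0 & 0 \\ X & 0}:\tilde\cH\to\tilde\cH.
\]
A direct block calculation shows that $\tilde X$ lies in the commutant of $\tilde{\mathsf S}$ precisely because $X$ intertwines the corresponding components of $\mathsf S$ and $\mathsf S'$, and clearly $\|\tilde X\|=\|X\|$.

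Next, I would realize the canonical unitary extension of $(\tilde\cH,\tilde{\mathsf S})$ inside $(\cK\oplus\cK',\mathsf R\oplus\mathsf R')$. The map $\tilde\fJ:=\fJ\oplus\fJ':\tilde\cH\to\cK\oplus\cK'$ is a contractive module map into a unitary module, with $\tilde\fJ^*\tilde\fJ=Q\oplus Q'=\operatorname{SOT-}\lim(P\oplus P')^{*n}(P\oplus P')^n$. Let $\tilde\cK\subseteq\cK\oplus\cK'$ be the smallest reducing subspace for $\mathsf R\oplus\mathsf R'$ containing $\tilde\fJ(\tilde\cH)$; the co-restricted map $\tilde\fJ:\tilde\cH\to\tilde\cK$ is then a canonical embedding into the unitary module $(\tilde\cK,(\mathsf R\oplus \mathsf R')|_{\tilde\cK})$. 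Applying Theorem~\ref{Thm:PLT}, I obtain $\tilde Y$ on $\tilde\cK$ in the commutant of $(\mathsf R\oplus \mathsf R')|_{\tilde\cK}$ with $\tilde\fJ\tilde X=\tilde Y\tilde\fJ$ and $\|\tilde Y\|\le \|\tilde X\|=\|X\|$.

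Finally, extend $\tilde Y$ by zero on $\tilde\cK^\perp$ to an operator $\hat Y$ on $\cK\oplus\cK'$. Since $\tilde\cK$ reduces $\mathsf R\oplus\mathsf R'$, $\hat Y$ still commutes with $\mathsf R\oplus\mathsf R'$ and $\|\hat Y\|=\|\tilde Y\|$. Writing $\hat Y=\sbm{A & B \\ Y & D}$ with respect to the decomposition $\cK\oplus\cK'$, I extract its $(2,1)$-entry $Y:\cK\to\cK'$. Block-wise comparison in $\hat Y(\mathsf R\oplus\mathsf R')=(\mathsf R\oplus\mathsf R')\hat Y$ yields $YR_i=R_i'Y$ for $1\le i\le d-1$ and $YU=U'Y$; comparing the $(2,1)$-entries in $\tilde\fJ\tilde X=\hat Y\tilde\fJ$ yields $\fJ'X=Y\fJ$. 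Because $Y$ is a corner of $\hat Y$, $\|Y\|\le \|X\|$.

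The one point requiring care is verifying that the co-restricted $\tilde\fJ$ is a genuine canonical embedding: the SOT-limit identity is automatic, and the minimality of $\tilde\cK$ by construction (together with the fact that a subspace of $\tilde\cK$ reducing $(\mathsf R\oplus\mathsf R')|_{\tilde\cK}$ automatically reduces $\mathsf R\oplus\mathsf R'$) ensures the required minimality. Everything else is routine $2\times 2$ block-matrix bookkeeping.
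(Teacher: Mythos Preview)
Your proof is correct and is precisely the ``$2\times 2$ operator matrix trick'' the paper alludes to. One small simplification: since $(\fJ,\cK,\mathsf R)$ and $(\fJ',\cK',\mathsf R')$ are already minimal, the smallest reducing subspace $\tilde\cK$ for $\mathsf R\oplus\mathsf R'$ containing $\fJ(\cH)\oplus\fJ'(\cH')$ is all of $\cK\oplus\cK'$ (each summand is forced by minimality of the individual extensions), so the restriction to $\tilde\cK$ and subsequent extension by zero are unnecessary.
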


\vspace{0.1in} \noindent\textbf{Acknowledgement:}
The research work of the first named author is supported by a J C Bose National Fellowship JCB/2021/000041 and that of the second and third named authors are supported by DST-INSPIRE Faculty Fellowships DST/INSPIRE/04/2015/001094 and DST/INSPIRE/04/2018/002458 respectively.


\begin{thebibliography}{99}

\bibitem{Arveson-Nest} W. Arveson, {\em Interpolation problems in nest algebras}, J. Funct. Anal. 20 (1975), 208-233.



\bibitem{BDS} T. Bhattacharyya, B. K. Das and H. Sau, {\em{Toeplitz operators on the symmetrized bidisc}}, Int. Math. Res. Not. IMRN 2021, no. 11, 8492–8520.

\bibitem{BhPSR} T. Bhattacharyya, S. Pal and S. Shyam Roy, {\em{Dilations of $\Gamma$-contractions by solving operator equations}}, Advances in Mathematics 230 (2012), 577-606.

\bibitem{BSR} S. Biswas and S. Shyam Roy, {\em{Functional models of $\Gamma_n$-contractions and characterization of $\Gamma_n$-isometries}}, J. Funct. Anal. 266 (2014), 6224-6255.


\bibitem{CE} M. D. Choi and E. G. Effros, {\em {Injectivity and operator spaces}}, J. Funct. Anal. 24 (1977), 156-209.



\bibitem{DS} B. K. Das and H. Sau, {\em Algebraic properties of Toeplitz operators on the symmetrized polydisk},  Complex Anal. Oper. Theory 15 (2021), Paper No. 60, 28 pp.


\bibitem{DMS} R. G. Douglas, G. Misra and J. Sarkar, {\em Contractive Hilbert modules and their dilations}, Israel J. Math. 187 (2012), 141–165.

\bibitem{DP} R. G. Douglas and V. I. Paulsen, {\em Hilbert modules over function algebras},
Pitman Research Notes in Mathematics Series, 217. Longman Scientific \& Technical, Harlow; copublished in the United States with John Wiley \& Sons, Inc., New York, 1989.

\bibitem{EE} J. Eschmeier and K. Everard, {\em Toeplitz projections and essential commutants}, J. Funct. Anal. 269 (2015), 1115-1135.

\bibitem{Kadison} R. V. Kadison, {\em Isometries of operator algebras}, Ann. of Math. 54 (1951), 325-338.






\bibitem{Muhly} P. S. Muhly, {\em Toeplitz operators and semigroups}, J. Math. Anal. Appl. 38 (1972), 312-319.

\bibitem{SouravArxiv} S. Pal, {\em Dilation, functional model and a complete unitary invariant for $C_{\cdot 0}$ $\Gamma_n$-contractions}, https://arxiv.org/pdf/1708.06015.pdf

\bibitem{APal} A. Pal, {\em On $\Gamma_n$-contractions and their Conditional Dilations}, https://arxiv.org/pdf/1704.04508.pdf

\bibitem{SouravCAOT} S. Pal, {\em Canonical decomposition of operators associated with the symmetrized polydisc}, Complex Anal. Oper. Theory 12 (2018), no. 4, 931–943.






\bibitem{PrunaruJFA}  B. Prunaru, {\em Toeplitz operators associated to commuting row contractions}, J. Funct. Anal. 254 (2008), no. 6, 1626-1641.

\bibitem{Sarkar} J. Sarkar, {\em  Operator theory on symmetrized bidisc}, Indiana Univ. Math. J. 64 (2015), no. 3, 847–873.







\end{thebibliography}
\end{document}